\newcolumntype{Y}{>{\centering\arraybackslash}X}
\pgfplotsset{compat=1.13}
\tikzset{cross/.style={cross out, draw=black, fill=none, minimum size=2*(#1-\pgflinewidth), inner sep=0pt, outer sep=0pt}, cross/.default={2pt}}
\pgfplotsset{compat=newest}
\newcommand{\R}{\mathbb{R}}
\renewcommand{\d}{\ \mathrm{d}}
\renewcommand{\d}{{\,\mathrm{d}}}
\newcommand{\norm}[1]{\left\lVert#1\right\rVert}
\newcommand{\restr}[1]{|_{#1}}
\newcommand{\lznormT}[1]{\left\lVert#1\right\rVert_{L^2(T)}}
\newcommand{\lznormO}[1]{\left\lVert#1\right\rVert_{L^2(\omega)}}
\newcommand{\partb}[1]{\partial_{#1} \psi_B}
\newcommand{\parta}[1]{\partial_{#1} \psi_A}
\newcommand{\intomega}[1]{\int_\omega {#1} \d x}
\newcommand{\transRelShapeOpt}[1]{B\left[#1\right]}
\newcommand{\transRelShapeOpth}[1]{B_{h}\left[#1\right]}
\newcommand{\secFF}{II}
\DeclareMathOperator {\tr} {tr}
\newtheorem{definition}{Definition}
\newtheorem{proposition}[definition]{Proposition}
\newtheorem{theorem}[definition]{Theorem}
\newtheorem{corollary}[definition]{Corollary}
\numberwithin{definition}{section}
\DeclareRobustCommand\onedot{\futurelet\@let@token\@onedot}
\def\@onedot{\ifx\@let@token.\else.\null\fi\xspace}
\def\ie{\emph{i.e}\onedot} 
\def\cf{\emph{cf}\onedot}
\def\etal{\emph{et al}\onedot}
\begin{document}

\title{Finite Element Approximation of Large-Scale Isometric Deformations of Parametrized Surfaces}
\author{
Martin Rumpf
\and Stefan Simon 
\and Christoph Smoch
}
\address{Institute for Numerical Simulation, University of Bonn, Endenicher Allee 60, 53115 Bonn, Germany}
\email{martin.rumpf@uni-bonn.de, s6stsimo@uni-bonn.de, s6chsmoc@uni-bonn.de}
\date{\today}
\renewcommand{\subjclassname}{\textup{2010} Mathematics Subject Classification}
\subjclass[2010]{65N12,   65N30,   74K25}

\begin{abstract}
In this paper, the numerical approximation of isometric deformations of thin elastic shells is discussed.
To this end, for a thin shell represented by a parametrized surface,
it is shown how to transform the stored elastic energy for an isometric deformation
such that the highest order term is quadratic.
For this reformulated model, existence of optimal isometric deformations is shown.
A finite element approximation is obtained using the Discrete Kirchhoff Triangle (DKT) approach
and the convergence of discrete minimizers to a continuous minimizer is demonstrated.
In that respect, this paper generalizes the results by Bartels
for the approximation of bending isometries of plates.
A Newton scheme is derived to numerically simulate large bending isometries of shells.
The proven convergence properties are experimentally verified
and characteristics of isometric deformations are discussed.
\end{abstract}
\keywords{
thin elastic shells, 
bending energy, 
isometric deformations,
discrete Kirchhoff triangle
}

\maketitle

\section{Introduction} \label{sec:intro}
We investigate deformations of thin elastic objects and their numerical approximation.
These objects are frequently characterized by a small thickness $\delta > 0$ and a regular and orientable two-dimensional midsurface $\mathcal{M}_A$.
Given an external force $f_A \colon \mathcal{M}_A \to \R^3$ acting on the thin object, equilibrium deformations have been extensively studied in the literature.
In particular, considering the limit of vanishing thickness,
$\Gamma$-convergence allows to express the 3D deformation of the thin object
by a 2D deformation of its midsurface.
In \cite{LeRa95,LeRa96}, Le Dret and Raoult obtained a membrane theory describing tangential distortion on the surface.
In this paper, we focus on a bending theory taking into account isometric deformations.
For such bending isometries, a first $\Gamma$-convergence result was provided by Friesecke~\etal in \cite{FrJaMue02b}
by rigorously deriving Kirchhoff's plate theory from nonlinear three dimensional elasticity.
In this special case of the two-dimensional midsurface being a flat object $\mathcal{M}_A = \omega \times \{ 0 \}$ for some suitable $\omega \subset \R^2$,
smooth isometric deformations are characterized by the global property that they are developable surfaces.
This has been shown for smooth isometries by Hartman and Nirenberg in \cite{HaNi59}. 
Moreover, Hornung~\cite{Ho11} has proven that this result holds true for $H^2$ isometries.
In~\cite{FrJaMo03}, Friesecke~\etal extended the $\Gamma$-convergence result to case of thin elastic shells,
where the corresponding midsurface $\mathcal{M}_A$ is in general allowed to be curved.
More precisely, it was shown that the bending energy depends on the so-called relative shape operator,
which we will detail in the following.

Throughout this paper, we will restrict to parametrized surfaces,
\ie $\mathcal{M}_A = \psi_A(\omega)$ for a bounded and connected Lipschitz domain $\omega \subset \R^2$
and an injective parametrization $\psi_A \in H^3(\omega; \R^3)$.
An external force $f_A \in L^2(\mathcal{M}_A; \R^3)$ acting on the midsurface is given via some $f\in L^2(\omega; \R^3)$ on the parameter domain with $f=f_A \circ \psi_A$.
The deformed midsurface $\mathcal{M}_B = \psi_B(\omega)$ is described by a parametrization $\psi_B \in H^2(\omega; \R^3)$.
The resulting actual deformation $\phi \colon \psi_A(\omega) \to \R^3$ of the thin shell midsurface $\psi_A(\omega)$
is then given by $\phi=\psi_B \circ \psi_A^{-1}$ (cf. Figure~\ref{fig:overview}).
\begin{figure}[htbp]
	\centering
	\resizebox{0.8\textwidth}{!}{
	\begin{tikzpicture}
        \node[inner sep=0pt] (Deformed) at (7,1.2) {\includegraphics[width=0.34\textwidth]{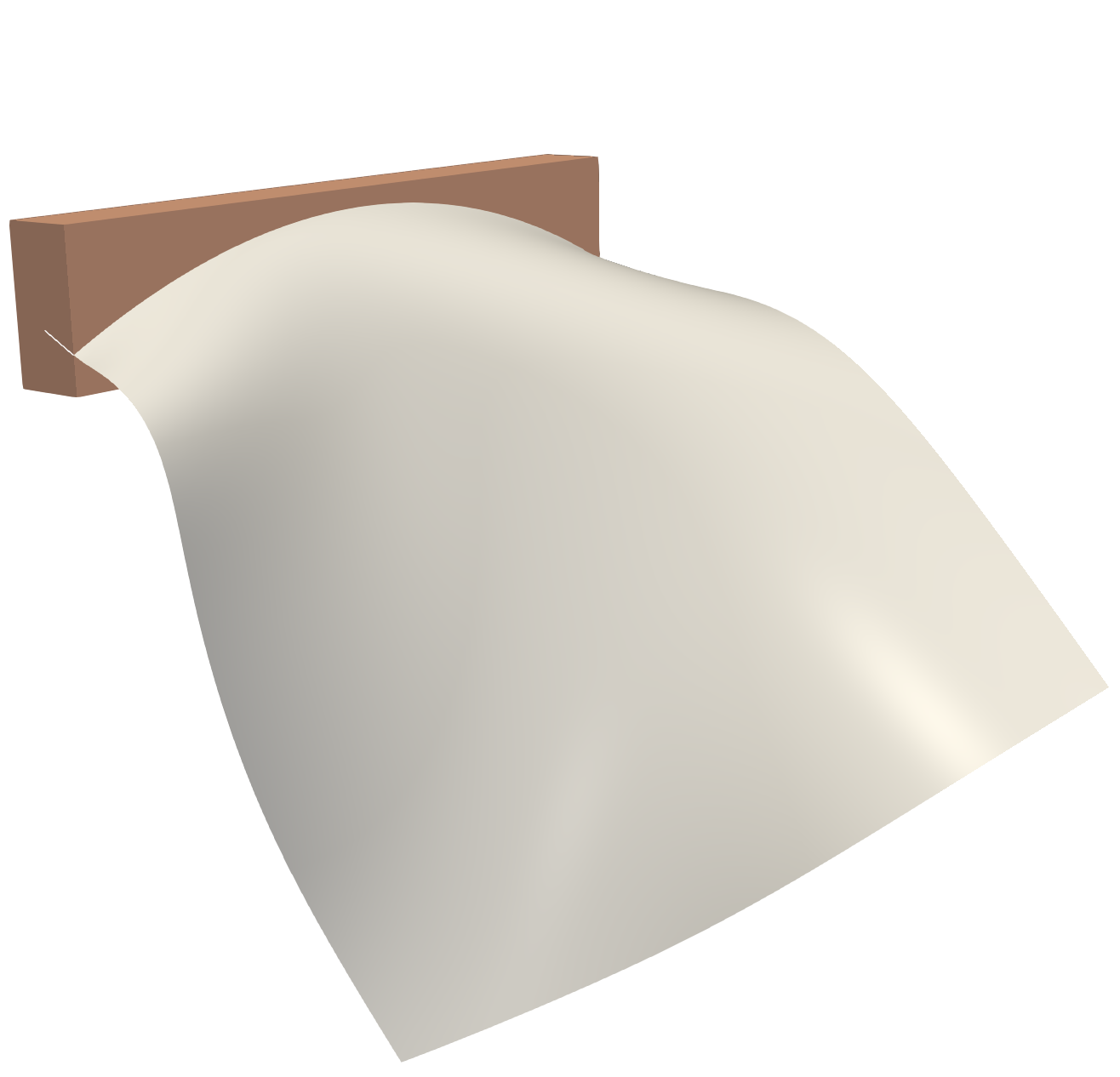}};
		\node at (7.4,1.8) {$\mathcal{M}_B$};
		\node[inner sep=0pt] (Undeformed) at (0,1.2) {\includegraphics[width=0.37\textwidth]{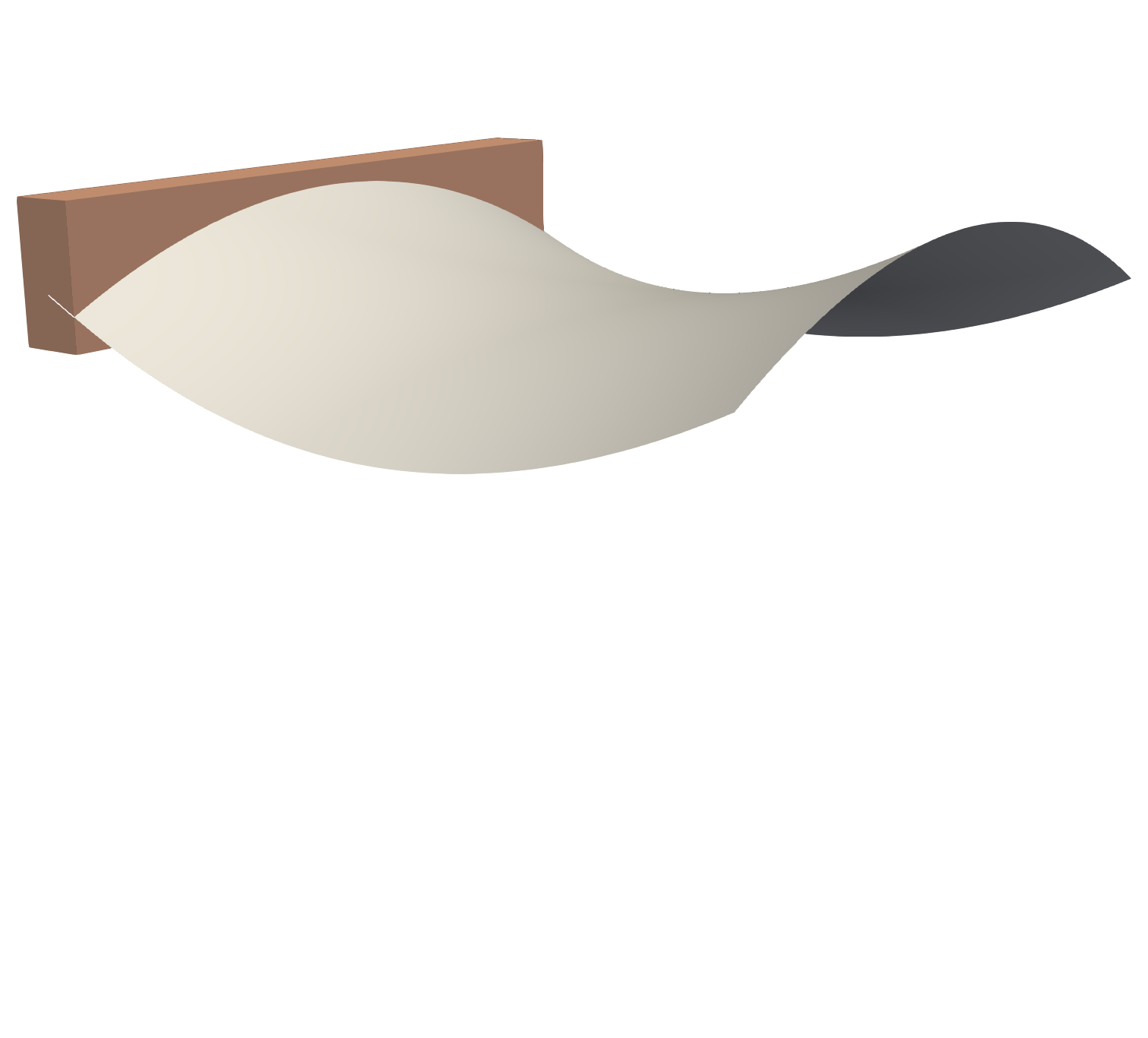}};
		\node at (-1.3,2.2) {$\mathcal{M}_A$};

		\node[inner sep=0pt] (Chart) at (3,0) {\includegraphics[width=0.25\textwidth]{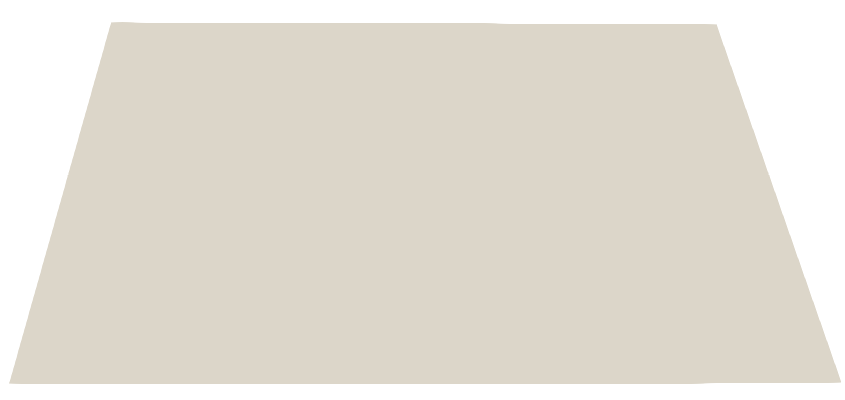}};
		\node at (3,-0.3) {$\omega \subset \R^2$};

		\path[->] (1.75, 0.) edge [bend left] node[left,xshift=-5] {$\psi_{A}$} (-0.6, 1.5);
		\path[->] (4.25, 0.) edge [bend right] node[right,xshift=5] {$\psi_{B}$} (6.6, 1.1);
		\path[->] (0.0, 2.1) edge [bend left] node[above, xshift=0] {$\phi = \psi_{B} \circ \psi_{A}^{-1}$} (6, 2.1);
	\end{tikzpicture} }
	\caption{A deformation $\phi$ of a parametrized surface $\mathcal{M}_A$ onto an image surface $\mathcal{M}_B$ with clamped boundary
	is described by parametrizations $\psi_A,\psi_B$ over the chart domain $\omega$.}
	\label{fig:overview}
\end{figure}
Note that the unit normal of $\mathcal{M}_B$ is defined for every $x\in \omega$ as
\begin{align*}
 n_B(x)  =  n[\psi_B](x) = \frac{\partial_{1} \psi_B(x) \times \partial_{2} \psi_B(x)}{\vert\partial_{1} \psi_B(x) \times \partial_{2} \psi_B(x)\vert} 
\end{align*}
and the corresponding first fundamental form $g_B$ and second fundamental form $\secFF_B$ at $x\in \omega$ are given by
\begin{align*}
 g_B(x) &= (\nabla \psi_B(x))^\top  \nabla \psi_B(x) \, , \\
 \secFF_B(x) &= \nabla n_B(x) \cdot \nabla \psi_B(x) =-D^2\psi_B(x) \cdot n_B(x) \, .
\end{align*}
Furthermore, the matrix representation of the shape operator $S_B$ of $\mathcal{M}_B$ at $x\in \omega$ is given by
$S_B(x) = g_B(x)^{-1}\secFF_B(x)$.
Normal, metric, second fundamental form and shape operator are analogously defined for $\mathcal{M}_A$ and the
given parametrization $\psi_A$.
To compare the shape operator $S_A$ of the undeformed configuration with the
shape operator $S_B$ of the deformed configuration, one considers
the matrix representation of the relative shape operator
\begin{align} \label{eq:relativeShapeOp}
  S^{\text{rel}}_{\psi_B} \coloneqq g_A^{-1}(\secFF_A - \secFF_B) = g_A^{-1} \left( D^2\psi_B \cdot n_B - D^2\psi_A \cdot n_A \right)\,.
\end{align}
Following Friesecke~\etal~\cite{FrJaMo03}, the bending energy $E_{\text{bend}} \colon H^2(\omega; \R^3) \to [0,\infty]$ 
of the deformed object is given by
\begin{align}\label{eq:bendingEnergy}
   E_{\text{bend}}[\psi_B] \coloneqq
   \begin{cases}
     \frac{\alpha}{2} \int_{\omega}{\sqrt{\det g_A}\ \tr\left(S^{\text{rel}}_{\psi_B}S^{\text{rel}}_{\psi_B}\right)} \d x & \text{if } g_B = g_A \, ,\\
     \infty & \text{otherwise} \, .
   \end{cases}
\end{align}
Note that $\tr\left(S^{\text{rel}}_{\psi_B}S^{\text{rel}}_{\psi_B}\right) = \left| g_A^{-\frac12}(\secFF_B - \secFF_A) g_A^{-\frac12} \right|^2$,
as it was derived for a prestrained plate model in \cite{BhLeSc16}.
A bending energy on prestrained plates involving the same integrand was also applied in \cite{BoGuNo21}.
Here, $\alpha>0$ denotes an elastic constant and the condition $g_B = g_A$ encodes the metric constraint on the map $\phi=\psi_B \circ \psi_A^{-1}$,
\ie
\begin{equation} \label{eq:isometryConstraint}
 (\nabla \psi_B(x))^\top  \nabla \psi_B(x) = g_B(x) = g_A(x) = (\nabla \psi_A(x))^\top \nabla \psi_A(x) \quad \text{for a. e. } x \in \omega \, .
\end{equation}
Moreover, we define the potential energy by
\begin{align*}
 E_{\text{pot}}[\psi_B] \coloneqq - \int_{\omega}{\sqrt{\det g_A}\ f \cdot \psi_B} \d x
\end{align*}
and consider clamped boundary conditions on $\Gamma_D \subset \partial \omega$ with $\mathcal{H}^{1}(\Gamma_D)>0$, \ie
\begin{align} \label{eq:clampedBdry}
  \psi_B = \psi_A\, , \quad \nabla \psi_B = \nabla \psi_A \quad \text{ on } \Gamma_D.
\end{align}
Finally, we ask for a minimizer $\psi_B$ of the total free energy
\begin{align}\label{eq:totalEnergy}
 E[\psi] \coloneqq E_{\text{bend}}[\psi] + E_{\text{pot}}[\psi]
 = \frac{\alpha}{2} \int_{\omega}{\sqrt{\det g_A}\, \tr\left(S^{\text{rel}}_{\psi}S^{\text{rel}}_{\psi}\right)} \d x - \int_{\omega}{\sqrt{\det g_A}\ f \cdot \psi} \d x
\end{align}
over all $\psi \in H^2(\omega; \R^3)$ satisfying the metric constraint~\eqref{eq:isometryConstraint} and the clamped boundary conditions \eqref{eq:clampedBdry}.
In a more general setting, other Dirichlet boundary conditions could be considered, such as $\psi_B = \varphi,\;\nabla \psi_B = \Phi$ on $\Gamma_D$. In general it is not clear if such a deformation $\psi_B$ satisfying  the metric constraint \eqref{eq:isometryConstraint} exists, even if $\Phi^\top\Phi = \nabla\psi_A^\top\nabla \psi_A$ on $\Gamma_D$ is satisfied. However, with the assumption that admissible deformations exist, the analysis presented in this paper could be extended to more general Dirichlet boundary conditions, as in \cite{Ba11}.

In this paper, we are primarily interested in a numerical approximation scheme for the above variational problem.
First, note that a conforming finite element approximation of the bending energy~\eqref{eq:bendingEnergy} would require globally $C^1$-elements, which are computationally demanding. 
As an alternative, Bonito \etal \cite{BoNoNt20} proposed a discontinuous Galerkin approach for isometric deformations of thin elastic plates and in \cite{BoGuNo21} Bonito \etal established a $\Gamma$-convergence theory of a local discontinuous Galerkin approach for prestrained plates.
Here, we follow Bartels~\cite{Ba11}, who made use of the discrete Kirchhoff Triangle (DKT) element
to approximate bending isometries in the case of deformations of thin elastic plates.
This approach has also been applied in \cite{Ba17} to approximate deformations of plates for a F\"{o}ppl--von K\'{a}rm\'{a}n model, which has been used to verify a break of symmetry for deformations of smooth, circular cones.
The key ingredient of the DKT element is a non-conforming second derivative with suitable approximation properties. Nodal-wise degrees of freedom for the Jacobian of the deformation
enable to restrict the isometry constraint to nodes of the underlying triangular mesh.
Furthermore, Bartels considered a linearization of the isometry constraint and a discrete gradient flow approach to minimize the energy.
In~\cite{HoRuSi20}, Hornung \etal applied the DKT element for a material optimization problem on thin elastic plates, where the isometry constraint was strictly enforced in a second order scheme.

Our goal is to extend the approximation result of Bartels to the case of curved surfaces $\mathcal{M}_A$
under the assumption that $\mathcal{M}_A$ is a parametrized surface as described above.
For isometric deformations in the flat case, the relative shape operator is symmetric and
the Frobenius norm of the relative shape operator is equal to the Frobenius norm of the second derivative of the deformation,
\ie
\begin{align} \label{eq:plateIsometrySimplification}
 \tr\left(S^{\text{rel}}_{\psi_B}S^{\text{rel}}_{\psi_B}\right)=\tr\left((S^{\text{rel}}_{\psi_B})^\top S^{\text{rel}}_{\psi_B}\right)=|S^{\text{rel}}_{\psi_B}|^2 = |D^2 \psi_B|^2 \, ,
\end{align}
which drastically simplifies the computational effort, since the second variation of the corresponding bending energy becomes 
independent of $\psi_B$.
In that respect, the central insight is a simplification of the relative shape operator similar to \eqref{eq:plateIsometrySimplification}.

The outline of this paper is as follows.
In Section~\ref{sec:reformulation}, we will rewrite the total elastic energy via a simplification of the relative shape operator and prove existence of a minimizing deformation.
In Section~\ref{sec:discrete}, the non-conforming finite element approximation via the Discrete Kirchhoff Triangle will be revisited and used to discretize the total elastic energy.
Instead of a linearization, we incorporate an exact metric constraint at nodal positions.
To solve the resulting constraint optimization problem numerically,
we take into account a Newton method for an associated Lagrangian in Section~\ref{sec:Newton}.
Finally, in Section~\ref{sec:results}, we discuss several selected examples and study the convergence behaviour experimentally.

\section{Reformulation of the bending energy}\label{sec:reformulation}
In this section, we will show an identity for the Frobenius norm of the relative shape operator $S^{\text{rel}}_{\psi_B}$ under the metric constraint.
This reformulation will ensure that the dependence of the elastic energy on second order derivatives of the parametrization $\psi_B$ is quadratic
and the remaining nonlinearity is a quadratic term involving the normal $n_B$ in the deformed configuration.

\begin{proposition}[transformed bending energy density]\label{prop:transform}
Let $\psi_B=(\psi_B^m)_{m=1,2,3} \in H^2(\omega; \R^3)$ with
$(\nabla \psi_B)^\top \nabla \psi_B= (\nabla \psi_A)^\top \nabla \psi_A$ almost everywhere in $\omega$.
Then, we have the identity 
	$\tr\left(S^{\text{rel}}_{\psi_B}S^{\text{rel}}_{\psi_B}\right)= |g_A^{-\frac12}(\secFF_B-\secFF_A)g_A^{-\frac12}|^2 = \transRelShapeOpt{\psi_B}$, where
\begin{align} \label{eq:transformedRelShapeOp}
  \transRelShapeOpt{\psi_B} \coloneqq 
  \sum_{m=1}^{3}|g_A^{-\frac12}D^2\psi_B^mg_A^{-\frac12}|^2 -2 \secFF_B :(g_A^{-1} \secFF_Ag_A^{-1}) + C_A
    \end{align}
	where the constant $C_A$ depends only on derivatives of $\psi_A$.
\end{proposition}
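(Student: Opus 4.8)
The plan is to separate the claim into the two asserted equalities and handle them in turn, noting that the first one is purely algebraic while the second is where the isometry constraint enters. For the identity $\tr\!\left(S^{\text{rel}}_{\psi_B}S^{\text{rel}}_{\psi_B}\right)= |g_A^{-\frac12}(\secFF_B-\secFF_A)g_A^{-\frac12}|^2$, I would write $M=\secFF_A-\secFF_B$ and insert $g_A^{-1}=g_A^{-\frac12}g_A^{-\frac12}$ into $\tr(g_A^{-1}Mg_A^{-1}M)$. Using the symmetry of $g_A$, $\secFF_A$, $\secFF_B$ together with the cyclic invariance of the trace gives $\tr(g_A^{-1}Mg_A^{-1}M)=\tr\big((g_A^{-\frac12}Mg_A^{-\frac12})^2\big)=|g_A^{-\frac12}Mg_A^{-\frac12}|^2$. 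This step needs no constraint.

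Next I would expand the squared Frobenius norm as
\[
 |g_A^{-\frac12}(\secFF_B-\secFF_A)g_A^{-\frac12}|^2 = |g_A^{-\frac12}\secFF_B g_A^{-\frac12}|^2 \;-\;2\,(g_A^{-\frac12}\secFF_A g_A^{-\frac12}):(g_A^{-\frac12}\secFF_B g_A^{-\frac12})\;+\;|g_A^{-\frac12}\secFF_A g_A^{-\frac12}|^2.
\]
The last summand depends only on $\psi_A$ and is absorbed into $C_A$. Cycling the trace in the cross term turns it into exactly $-2\,\secFF_B:(g_A^{-1}\secFF_A g_A^{-1})$, the middle term of $\transRelShapeOpt{\psi_B}$. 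It then remains to identify the leading term $|g_A^{-\frac12}\secFF_B g_A^{-\frac12}|^2$ with $\sum_{m}|g_A^{-\frac12}D^2\psi_B^m g_A^{-\frac12}|^2$ up to a further $\psi_A$-term.

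This identification is the heart of the argument and the only place the metric constraint is used. I would decompose each Hessian vector $\partial_i\partial_j\psi_B\in\R^3$ in the moving frame $\{\partial_1\psi_B,\partial_2\psi_B,n_B\}$, writing $\partial_i\partial_j\psi_B=\Gamma^p_{ij}\partial_p\psi_B + h_{ij}\,n_B$ with $h_{ij}=\partial_i\partial_j\psi_B\cdot n_B=-(\secFF_B)_{ij}$. The key observation is that the tangential inner products $\partial_i\partial_j\psi_B\cdot\partial_l\psi_B$ are Christoffel symbols of the first kind, i.e. first-order expressions $\tfrac12(\partial_i (g_B)_{jl}+\partial_j (g_B)_{il}-\partial_l (g_B)_{ij})$ in the metric $g_B$. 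Since $g_B=g_A$ almost everywhere, these tangential quantities — and the Gram matrix $g_B=g_A$ against which they are contracted — depend only on $\psi_A$.

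Computing $\sum_m|g_A^{-\frac12}D^2\psi_B^m g_A^{-\frac12}|^2=\sum_{i,j,k,l} g^{ij}g^{kl}\,(\partial_j\partial_k\psi_B\cdot\partial_l\partial_i\psi_B)$ with $g^{ij}=(g_A^{-1})_{ij}$, I would use the orthogonality of the tangential and normal directions to split each inner product $\partial_j\partial_k\psi_B\cdot\partial_l\partial_i\psi_B$ into a tangential--tangential part (determined by $g_A$ and its derivatives, hence a constant $C_A'$ after contraction) plus a normal--normal part $(\secFF_B)_{jk}(\secFF_B)_{li}$. Contracting the latter with $g^{ij}g^{kl}$ yields $\tr(g_A^{-1}\secFF_B g_A^{-1}\secFF_B)=|g_A^{-\frac12}\secFF_B g_A^{-\frac12}|^2$, and setting $C_A=|g_A^{-\frac12}\secFF_A g_A^{-\frac12}|^2-C_A'$ finishes the proof. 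The main obstacle I anticipate is making the Gauss-type decomposition and the identification of the tangential inner products with metric derivatives rigorous at $H^2$ regularity: one must justify the weak product and chain rules for $\partial_i\psi_B\in H^1$ and the a.e. validity of the frame splitting (using that $g_A$ is uniformly positive definite so that $n_B$ is well defined), and invoke $g_B=g_A$ at this level rather than assume classical smoothness.
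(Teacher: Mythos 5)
Your proposal is correct and follows essentially the same route as the paper's proof: both expand the squared Frobenius norm, absorb the pure-$\psi_A$ terms into $C_A$, and identify the leading term by decomposing $\partial_i\partial_j\psi_B$ into normal and tangential components, using the differentiated metric constraint (the tangential products $\partial_i\partial_j\psi_B\cdot\partial_k\psi_B$ are Christoffel-type expressions in $g_B=g_A$, hence equal to the corresponding quantities for $\psi_A$) so that the tangential contributions are constants depending only on $\psi_A$. The only cosmetic difference is that you work in the coordinate frame $\{\partial_1\psi_B,\partial_2\psi_B,n_B\}$ with Christoffel symbols of the first kind, whereas the paper first Gram--Schmidt-orthonormalizes the tangent vectors; the underlying identity \eqref{eq:secondtimesfirstfinal} and the structure of the argument are the same.
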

\begin{proof}
Differentiation of $\partb{i}\cdot \partb{i} = \parta{i} \cdot \parta{i}$ for $i \in \{1,2\}$ in direction $j \in \{1,2\}$ yields
$\partial_j \partb{i}\cdot \partb{i}  =  \partial_j \parta{i}\cdot \parta{i}$.
Similarly, differentiation of $\partb{i}\cdot \partb{j}=\parta{i}\cdot \parta{j}$ in direction $i \in \{1,2\}$ gives
$$ \partial_i^2 \psi_B \cdot \partb{j} + \partial_j \partb{i}\cdot \partb{i} = \partial_i^2 \psi_A \cdot \parta{j} + \partial_j \parta{i}\cdot \parta{i}$$
and taking into account the first identity we obtain $ \partial_i^2 \psi_B \cdot \partb{j} = \partial_i^2 \psi_A\cdot \parta{j}$.
Altogether, using that the parameter domain is two dimensional, we obtain
\begin{equation}
\partial_i \partial_j \psi_B \cdot \partial_k \psi_B = \partial_i \partial_j \psi_A \cdot \partial_k \psi_A \ \ \forall i,j,k\in \{1,2\}.
\label{eq:secondtimesfirstfinal}
\end{equation}
Next, we consider the Gram-Schmidt orthonormalization of the columns of the Jacobian $\nabla \psi_B$ 
\begin{align*}
 y_B^1 \coloneqq \frac{1}{\vert\partb{1}\vert}\partb{1}\,,\quad y_B^2 \coloneqq \frac{1}{\vert\hat{y}_B^2\vert}\hat{y}_B^2 \quad \text{ with  }\quad
 \hat{y}_B^2 \coloneqq \partb{2} - (\partb{2} \cdot y_B^1)y_B^1
 \end{align*}
and define $y_A^1,y_A^2,\hat{y}_A^2$ analogously for the parametrization $\psi_A$.\\
Then,  both $\{ y_B^1, y_B^2, n_B \}$ and $\{ y_A^1, y_A^2, n_A \}$ form an orthonormal basis of $\R^3$.
In particular, we get the orthogonal decomposition
\begin{equation}
	\partial_k \partb{j} = (\partial_k \partial_j \psi_B \cdot n_B)n_B + (\partial_k \partial_j \psi_B \cdot y_B^1)y_B^1 +(\partial_k \partial_j \psi_B \cdot y_B^2)y_B^2\,.
	\label{eq:newd2psi}
\end{equation}
By the metric constraint $\vert\parta{1}\vert = \vert\partb{1}\vert$ and $\parta{2} \cdot \parta{1} = \partb{2} \cdot \partb{1}$
and consequently
\begin{align*}
	y_B^1 &= \frac{1}{\vert\parta{1}\vert}\partb{1}\, ,\quad \hat{y}_B^2 = \partb{2} - (\parta{2}\cdot y_A^1)\frac{1}{\vert\parta{1}\vert}\partb{1}.
\end{align*}
Furthermore, we obtain
\begin{align*}
	\vert\hat{y}_B^2\vert^2 =& \vert\partb{2}\vert^2 - 2(\parta{2}\cdot y_A^1)\frac{1}{\vert\parta{1}\vert}\partb{1} \cdot \partb{2} + \left\vert(\parta{2}\cdot y_A^1)\frac{1}{\vert\parta{1}\vert}\right\vert^2 \vert\partb{1}\vert^2 \\
	=&  \vert\parta{2}\vert^2 - 2(\parta{2}\cdot y_A^1)\frac{1}{\vert\parta{1}\vert}\parta{1} \cdot \parta{2} + \left\vert(\parta{2}\cdot y_A^1)\frac{1}{\vert\parta{1}\vert}\right\vert^2 \vert\parta{1}\vert^2\\ =& \vert\hat{y}_A^2\vert^2.
\end{align*}
Taking into account \eqref{eq:secondtimesfirstfinal}, it follows that
\begin{align*}
\partial_k \partb{j} \cdot y_B^1 &=\frac{1}{\vert\parta{1}\vert}\partial_k \partb{j} \cdot \partb{1}=\frac{1}{\vert\parta{1}\vert}\partial_k \parta{j} \cdot \parta{1} =  \partial_k \parta{j} \cdot y_A^1\,,\\
	\partial_k \partb{j} \cdot y_B^2 &= \frac{1}{\vert\hat{y}_B^2\vert} \partial_k \partb{j} \cdot \hat{y}_B^2=\frac{1}{\vert\hat{y}_A^2\vert} \partial_k \partb{j} \cdot (\partb{2} - (\partb{2} \cdot y_B^1)y_B^1)\\
	&=\frac{1}{\vert\hat{y}_A^2\vert} \partial_k \parta{j} \cdot (\parta{2} - (\parta{2} \cdot y_A^1)y_A^1) = \partial_k \parta{j} \cdot y_A^2\,.
\end{align*}
Thus, we obtain
\begin{equation*}
	\partial_k \partb{j} = (\partial_k \partial_j \psi_B \cdot n_B)n_B + (\partial_k \partial_j \psi_A \cdot y_A^1)y_B^1 +(\partial_k \partial_j \psi_A \cdot y_A^2)y_B^2.
\end{equation*}
Next, we consider the integrand of the bending energy. 
Similar to the calculations in \cite{BaBoNo17} in the context of prestrained plates, we can write
	\begin{align*}
		\left| g_A^{-\frac12}(\secFF_B-\secFF_A)g_A^{-\frac12} \right|^2 = \left| g_A^{-\frac12}\secFF_B g_A^{-\frac12} \right|^2 - 2 (g_A^{-\frac12}\secFF_B g_A^{-\frac12} ):(g_A^{-\frac12}\secFF_A g_A^{-\frac12} ) + \left| g_A^{-\frac12}\secFF_A g_A^{-\frac12} \right|^2\,.
	\end{align*}
	The last term only depends on the initial configuration $\psi_A$ and can hence be neglected.
Let $g_A^{-\frac12} = \left(g^{-\frac12}_{A,ij}\right)_{i,j=1,2}$. 
 Using $\vert n_B\vert^2=1$ and the decomposition \eqref{eq:newd2psi}, $\left| g_A^{-\frac12}\secFF_B g_A^{-\frac12} \right|^2$ can be written as
\begin{align*}
&\sum_{i,j=1}^2\left| \sum_{k,l=1}^2g^{-\frac12}_{A,ik}g^{-\frac12}_{A,lj} \left(\partial_k \partial_l \psi_B\cdot n_B\right) \right|^2 = \sum_{i,j=1}^2\left| \sum_{k,l=1}^2g^{-\frac12}_{A,ik}g^{-\frac12}_{A,lj} \left(\partial_k \partial_l \psi_B\cdot n_B\right)n_B \right|^2 \\ =& \sum_{i,j=1}^2\left| \sum_{k,l=1}^2g^{-\frac12}_{A,ik}g^{-\frac12}_{A,lj} \left(\partial_k \partial_l \psi_B-\left[  \left(\partial_k \partial_l \psi_A\cdot y_A^1\right)y_B^1 + \left(\partial_k \partial_l \psi_A\cdot y_A^2\right)y_B^2\right]\right)\right|^2 \\
=& \sum_{i,j=1}^2\left| \sum_{k,l=1}^2g^{-\frac12}_{A,ik}g^{-\frac12}_{A,lj}\partial_k \partial_l \psi_B\right|^2 + \sum_{i,j=1}^2\left| \sum_{k,l=1}^2g^{-\frac12}_{A,ik}g^{-\frac12}_{A,lj}\left[  \left(\partial_k \partial_l \psi_A\cdot y_A^1\right)y_B^1 + \left(\partial_k \partial_l \psi_A\cdot y_A^2\right)y_B^2\right]\right|^2
\\ &- 2 \sum_{i,j=1}^2\!\left( \sum_{k,l=1}^2g^{-\frac12}_{A,ik}g^{-\frac12}_{A,lj} \partial_k \partial_l \psi_B\!\right)\!\cdot\! \left( \sum_{k,l=1}^2g^{-\frac12}_{A,ik}g^{-\frac12}_{A,lj} \left[ \left(\partial_k \partial_l \psi_A\cdot y_A^1\right)y_B^1 \!+\! \left(\partial_k \partial_l \psi_A\cdot y_A^2\right)y_B^2\right]\!\right).
\end{align*}
Since $|y_B^1|^2 = |y_B^2|^2=1$ and $y_B^1 \cdot y_B^2 = 0$, the second term on the right hand side is only depending on $\psi_A$ and can hence be regarded as constant. The same applies for the third term, considering the metric constraint and the calculations made above. Furthermore we can rewrite, using the symmetry of $g_A^{-\frac12}$
\begin{align*}
	&(g_A^{-\frac12}\secFF_B g_A^{-\frac12} ):(g_A^{-\frac12}\secFF_A g_A^{-\frac12} ) 
	= \sum_{i,j=1}^{2} \left(\sum_{k,l=1}^{2} g^{-\frac12}_{A,ik}\secFF_B^{kl}g^{-\frac12}_{A,lj}\right)\left(\sum_{m,n=1}^{2} g^{-\frac12}_{A,im}\secFF_A^{mn}g^{-\frac12}_{A,nj}\right)\\
	=& \sum_{k,l=1}^{2} \secFF_B^{kl}\left( \sum_{m,n=1}^{2} \left( \sum_{i=1}^{2} g^{-\frac12}_{A,ki} g^{-\frac12}_{A,im}\right) \secFF_A^{mn} \left( \sum_{j=1}^{2} g^{-\frac12}_{A,lj} g^{-\frac12}_{A,jn} \right) \right) = \secFF_B : (g_A^{-1}\secFF_Ag_A^{-1})
\end{align*}
 which proves the claim.
\end{proof}
As an immediate consequence, we obtain the following transformed total free energy.
\begin{corollary}[transformation of the total free energy]
Let $\psi_B \in H^2(\omega; \R^3)$ with
$(\nabla \psi_B)^\top \nabla \psi_B= (\nabla \psi_A)^\top \nabla \psi_A$ almost everywhere in $\omega$.
Then, the total free energy~\eqref{eq:totalEnergy} can up to a constant be rewritten as
\begin{align} \label{eq:transTotalEnergy}
 E[\psi_B] 
 = \frac{\alpha}{2} &\int_{\omega}\sqrt{\det g_A}\left( \sum_{m=1}^{3}|g_A^{-\frac12}D^2\psi_B^mg_A^{-\frac12}|^2 -2\secFF_B : (g_A^{-1}\secFF_Ag_A^{-1})\right) \d x \\
   - &\int_{\omega}{\sqrt{\det g_A}\ f \cdot \psi_B} \d x \nonumber
\end{align}
\end{corollary}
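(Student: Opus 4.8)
The plan is to obtain the statement by direct substitution of Proposition~\ref{prop:transform} into the definition~\eqref{eq:totalEnergy} of the total free energy, the only genuine point to verify being that the contribution arising from $C_A$ is a fixed number independent of $\psi_B$, so that it may legitimately be dropped.

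First I would observe that, since $\psi_B$ satisfies the metric constraint $(\nabla\psi_B)^\top\nabla\psi_B = (\nabla\psi_A)^\top\nabla\psi_A$ almost everywhere by assumption, the bending energy $E_{\text{bend}}[\psi_B]$ is given by the finite branch of~\eqref{eq:bendingEnergy}, namely $\frac{\alpha}{2}\int_\omega \sqrt{\det g_A}\,\tr\!\left(S^{\text{rel}}_{\psi_B}S^{\text{rel}}_{\psi_B}\right)\d x$. Applying the identity $\tr\!\left(S^{\text{rel}}_{\psi_B}S^{\text{rel}}_{\psi_B}\right) = \transRelShapeOpt{\psi_B}$ from Proposition~\ref{prop:transform} and inserting the explicit form~\eqref{eq:transformedRelShapeOp}, the integrand splits into the two $\psi_B$-dependent terms $\sum_{m=1}^{3}|g_A^{-\frac12}D^2\psi_B^m g_A^{-\frac12}|^2 - 2\,\secFF_B:(g_A^{-1}\secFF_A g_A^{-1})$ together with the additive contribution $C_A$.

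Next I would isolate the $C_A$-contribution $\frac{\alpha}{2}\int_\omega \sqrt{\det g_A}\,C_A\d x$ and argue that it is a fixed real number, independent of the competitor $\psi_B$. Since $C_A$ depends only on derivatives of $\psi_A$, and $\psi_A\in H^3(\omega;\R^3)$ together with the injectivity of the parametrization guarantees that $g_A^{-1}$ is well defined and $\sqrt{\det g_A}$ is bounded, this integral is finite and unaffected by the choice of $\psi_B$; hence it may be discarded in the sense of an additive constant. The potential energy $E_{\text{pot}}[\psi_B]$ enters unchanged, and summing the two contributions yields exactly~\eqref{eq:transTotalEnergy}. The argument is essentially immediate from the Proposition, so the only step requiring any care is this bookkeeping confirming the finiteness and $\psi_B$-independence of the constant term; no substantial obstacle arises beyond it.
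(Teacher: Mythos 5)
Your proposal is correct and matches the paper's treatment: the paper states the corollary as an immediate consequence of Proposition~\ref{prop:transform}, obtained by substituting the identity $\tr\left(S^{\text{rel}}_{\psi_B}S^{\text{rel}}_{\psi_B}\right)=\transRelShapeOpt{\psi_B}$ into the finite branch of~\eqref{eq:totalEnergy} and absorbing the $\psi_B$-independent term $C_A$ into an additive constant. Your additional remarks on the finiteness of the constant term are harmless bookkeeping beyond what the paper spells out.
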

Using this reformulation, we obtain the following existence result.
\begin{theorem}[existence]\label{thm:existence}
For the set
\begin{equation*}
\mathcal{A} = \{ \psi\in H^2(\omega; \R^3)\; \vert \; (\nabla \psi)^\top \nabla \psi = (\nabla \psi_A)^\top \nabla \psi_A \text{ a.e. in }\omega;\ \psi=\psi_A,\nabla \psi= \nabla \psi_A \text{ on }\Gamma_D \}\end{equation*}
of admissible parametrizations subject to the metric constraint and clamped boundary conditions and for $f\in L^2(\omega, \R^3)$
there exists a parametrization $\psi_B \in \mathcal{A}$
which minimizes the total free energy $E[\psi]$ given in \eqref{eq:transTotalEnergy} over all $\psi\in \mathcal{A}$\,.
\end{theorem}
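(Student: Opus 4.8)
The plan is to apply the direct method of the calculus of variations to the transformed energy \eqref{eq:transTotalEnergy}. Throughout I use that in two dimensions $\psi_A \in H^3(\omega;\R^3)$ embeds into $C^1(\bar\omega)$, so that $g_A$ is continuous, and I treat the parametrization as a regular immersion, i.e.\ $\det g_A \geq c_0 > 0$ on $\bar\omega$; consequently $\sqrt{\det g_A}$, $g_A^{-1}$ and $g_A^{-\frac12}$ are bounded above and uniformly positive definite, while $\secFF_A = -D^2\psi_A\cdot n_A \in L^2(\omega)$. First I would record that $\mathcal{A}\neq\emptyset$, since $\psi_A$ itself satisfies both the metric constraint and the clamped conditions, so that $\inf_{\mathcal{A}} E < \infty$.

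Next I would establish a lower bound. The leading density $\sum_{m=1}^3 |g_A^{-\frac12} D^2\psi^m g_A^{-\frac12}|^2$ is a positive quadratic form in $D^2\psi$ and, by uniform ellipticity of $g_A^{-\frac12}$, is bounded below by $c\,|D^2\psi|^2$ pointwise. The cross term is linear in $\secFF_B=-D^2\psi\cdot n[\psi]$; since $|n[\psi]|=1$, Cauchy--Schwarz bounds it by $C\|\secFF_A\|_{L^2}\|D^2\psi\|_{L^2}$, which Young's inequality absorbs into the quadratic term at the cost of an additive constant. Crucially, the metric constraint forces $|\nabla\psi|^2 = \tr g_A \leq C$ a.e., so every $\psi\in\mathcal{A}$ has $\nabla\psi$ uniformly bounded in $L^\infty$; together with the clamped data on $\Gamma_D$ (of positive $\mathcal{H}^1$-measure), a Poincaré inequality yields a uniform $L^2$ bound on $\psi$, whence the potential term is uniformly bounded on $\mathcal{A}$. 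Altogether $E[\psi] \geq \tfrac{\alpha}{4}\,c\,\|D^2\psi\|_{L^2}^2 - C'$, so $E$ is bounded below and coercive in the $H^2$-seminorm; combined with the $L^\infty$ bound on $\nabla\psi$ and the $L^2$ bound on $\psi$, any minimizing sequence $(\psi_k)$ is bounded in $H^2(\omega;\R^3)$. I would then extract a subsequence with $\psi_k \rightharpoonup \psi_B$ weakly in $H^2$.

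To see that $\mathcal{A}$ is weakly closed, I would use the compact embedding $H^2(\omega)\hookrightarrow\hookrightarrow H^1(\omega)$ to obtain $\nabla\psi_k \to \nabla\psi_B$ strongly in $L^2$ and, along a further subsequence, pointwise almost everywhere; passing to the limit in $(\nabla\psi_k)^\top\nabla\psi_k = g_A$ then shows that $\psi_B$ inherits the metric constraint. The clamped conditions pass to the limit because the traces $\psi\mapsto\psi|_{\Gamma_D}$ and $\nabla\psi\mapsto\nabla\psi|_{\Gamma_D}$ are compact from $H^2$, resp.\ from $H^1$, into $L^2(\Gamma_D)$. Hence $\psi_B\in\mathcal{A}$.

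Finally I would prove weak lower semicontinuity of $E$. The leading quadratic term is convex and continuous with respect to strong $L^2$-convergence of $D^2\psi$, hence weakly lower semicontinuous, and the potential term converges since $\psi_k\to\psi_B$ strongly in $L^2$. The main obstacle is the cross term $-\alpha\int_\omega\sqrt{\det g_A}\,\secFF_B:(g_A^{-1}\secFF_A g_A^{-1})\d x$, because $\secFF_B=-D^2\psi_B\cdot n_B$ couples the only weakly convergent factor $D^2\psi_k$ with the nonlinear quantity $n_B=n[\psi_B]$. The resolution is to exploit that the metric constraint keeps $|\partial_1\psi_k\times\partial_2\psi_k| = \sqrt{\det g_A} \geq \sqrt{c_0}$ bounded away from zero, so that $n[\,\cdot\,]$ is a bounded Lipschitz function of $\nabla\psi$ on the admissible range; combined with $\nabla\psi_k\to\nabla\psi_B$ almost everywhere and dominated convergence, the normals $n[\psi_k]\to n_B$ strongly in every $L^p$ with $p<\infty$. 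Writing the cross term so that the fixed $L^2$-field built from $\psi_A$ is first contracted with $n[\psi_k]$, this coefficient converges strongly in $L^2$ while $D^2\psi_k$ converges only weakly in $L^2$, and the weak--strong pairing converges to the correct limit. Thus $E[\psi_B] \le \liminf_k E[\psi_k] = \inf_{\mathcal{A}} E$, and since $\psi_B\in\mathcal{A}$ it is the sought minimizer. The delicate point I would treat most carefully is precisely this weak--strong argument for the cross term, where the isometry constraint does double duty: it supplies both the uniform $L^\infty$ control on $\nabla\psi_k$ and the lower bound on $\det g_B$ that force $n[\psi_k]$ to converge strongly.
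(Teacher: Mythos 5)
Your proposal is correct and follows essentially the same route as the paper: the direct method with coercivity obtained from Cauchy--Schwarz, Young's and Poincar\'e's inequalities, Rellich--Kondrachov to get pointwise convergence of the gradients (hence weak closedness of $\mathcal{A}$ and strong convergence of the normals), a weak--strong pairing for the cross term involving $\secFF_B$, and convexity for the lower semicontinuity of the leading quadratic term. Your added observations (the $L^\infty$ bound $|\nabla\psi|^2=\tr g_A$ and the lower bound on $|\partial_1\psi\times\partial_2\psi|=\sqrt{\det g_A}$ guaranteeing that $n[\cdot]$ is well behaved on the admissible set) are implicit in the paper's argument and make it slightly more explicit, but do not change the approach.
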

\begin{proof}
We begin remarking that $\mathcal{A}$ is nonempty, because $\psi_A\in\mathcal{A}$.
To apply the direct method in the calculus of variations we at first show the
uniform boundedness of a minimizing sequence in $H^2(\omega, \R^3)$.
To this end,
	we first estimate for $\psi \in \mathcal{A}$
\begin{align*}
E[\psi] \geq& 
\frac{\alpha}{2} \int_{\omega} \sqrt{\det g_A}\sum_{m=1}^{3}\left|g_A^{-\frac12}D^2\psi^mg_A^{-\frac12}\right|^2\d x \\
&- \alpha \int_{\omega} \sqrt{\det g_A}\left|D^2\psi \cdot n[\psi]\right| \left|g_A^{-1} \secFF_Ag_A^{-1}\right|\text{d}x 
- \int_{\omega} \sqrt{\det g_A}\ |f| |\psi| \d x\,.
\end{align*}
using
Cauchy-Schwartz' inequality. Note that $\left|D^2\psi \cdot n[\psi]\right| \leq \left|D^2\psi\right|$ again by Cauchy-Schwartz with $|n[\psi]| = 1$. 
Thus, since 
$g_A$ and $g_A^{-\frac12}$ are uniformly bounded, we obtain
\begin{align*}
E[\psi] \geq c \norm{D^2 \psi}_{L^2(\omega)}^2 - C\left( \intomega{\left|D^2\psi\right| \left|g_A^{-1} \secFF_Ag_A^{-1}\right|} + \intomega{|f||\psi|} \right)
\end{align*} for generic constants $c,\,C>0$ depending only on $\psi_A$ and $\alpha$.
Making use of  Poincare's inequality and Young's inequality,
we obtain
$E[\psi] \geq c \norm{D^2 \psi}_{L^2(\omega)}^2 - C$.
Next, let $(\psi_l)_l\subset \mathcal{A}$ be a minimizing sequence with
$\inf_{\psi\in \mathcal{A}} E[\psi] = \lim_{l\to\infty}E[\psi_l]$.
Then, the last estimate ensures that $\norm{\psi_l}_{H^2(\omega,\R^3)}\leq C$.
By the reflexivity of $H^2$, there exists a subsequence and a function $\psi_B\in H^2(\omega,\R^3)$ s.t. after a reindexing $\psi_l$ converges weakly to some 
$\psi_B$ in $H^2(\omega,\R^3)$.
By the Rellich--Kondrachov compactness theorem, we can extract another subsequence ensuring that $\nabla \psi_l(x)\to\nabla \psi_B(x)$ point-wise almost everywhere. 
Thus the limit $\psi_B$ also fulfills the metric constraint and by the trace theorem the clamped boundary conditions.
Hence, $\psi_B\in\mathcal{A}$.

Furthermore, the sequence of normal fields $n[\psi_l]$ is uniformly bounded in  $L^\infty(\omega, \R^3)$ and $n[\psi_l]$ converges
point-wise almost everywhere to $n[\psi_B]$. Altogether,
 $D^2\psi_l \cdot n[\psi_l]$
 converges weakly in $L^2(\omega,\R^{2\times2})$ to
$D^2\psi_B \cdot n[\psi_B] =  \secFF_B$.
Finally, the convexity of $|\cdot|^2$ implies that the total free energy $E[\cdot]$ is weakly lower semi-continuous and thereby
\begin{align*}
E[\psi_B] \leq \liminf\limits_{l\to\infty}E[\psi_l] = \inf_{\psi \in \mathcal{A}} E[\psi] \,.
\end{align*}
\end{proof}
\section{Discretization based on the Discrete Kirchhoff Triangle} \label{sec:discrete}
In this section, we will derive a non-conforming finite element discretization of the total free energy and the corresponding discrete
metric constraint. 
This derivation follows the general approach proposed by Bartels for discrete deformations of plates in~\cite{Ba11}.
In addition, we refer to the monograph~\cite{Ba15}.
At first, let us review the non-conforming finite element approximation based on the Discrete Kirchhoff Triangle (DKT).
For simplicity, we directly assume that $\omega$ is a polygonal parameter domain.
Let $\mathcal{T}_h$ be a regular triangulation of $\omega$ with maximal triangle diameter $h>0$.
We denote by $\mathcal{N}_h$ the set of vertices and by $\mathcal{E}_h$ the set of edges.
For $k\in \mathbb{N}$, we denote by $\mathcal{P}_k$ the set of polynomials of degree at most $k$.
For vertices $z_1,z_2,z_3\in \mathcal{N}_h$ of a triangle $T$ we define $z_T = (z_1+z_2+z_3)/3$ as the center of mass of $T$ 
and introduce the reduced space of cubic polynomials
\begin{align*}
\mathcal{P}_{3,\text{red}}(T) \coloneqq \left\{ p\in \mathcal{P}_3(T) \; \Big\vert \; 6p(z_T)
= \sum_{i = 1,2,3}\left( 2p(z_i) - \nabla p(z_i)\cdot (z_i - z_T) \right) \right\}
\end{align*}
which still has $\mathcal{P}_{2}$ as a subspace and the finite element spaces
\begin{align*}
	{\bf{W}}_h \coloneqq& \left\{ w_h \in C(\widebar{\omega}) \; \vert \; w_h \restr{T} \in \mathcal{P}_{3,\text{red}}(T) \text{ for all }T \in \mathcal{T}_h  \text{ and }\nabla w_h \text{ is continuous at }\mathcal{N}_h \right\}\,, \\
	{\bf{\Theta}}_h \coloneqq& \left\{ \theta_h \in C(\widebar{\omega};\R^2) \; \vert \; \theta_h\restr{T} \in \mathcal{P}_2(T)^2 \text{ and }\theta_h \cdot n_E\restr{E} \text{ is affine for all } E\in \mathcal{E}_h  \right\}\, .
\end{align*}
For a function $w \in H^3(\omega)$, the interpolation $w_h = \mathcal{I}^{DKT} w \in {\bf{W}}_h$ is defined on every triangle $T\in \mathcal{T}_h$ by 
$w_h(z) = w(z)$ and $\nabla w_h(z) = \nabla w(z)$ for all vertices $z\in \mathcal{N}_h \cap T$, 
which is well-defined due to the continuous embedding of $H^3(\omega)$ into $C^1(\widebar \omega)$.
The discrete gradient operator $\theta_h \colon {\bf{W}}_h \to {\bf{\Theta}}_h$ is defined via
\begin{align*}
\theta_h[w_h](z) = \nabla w_h(z) \, ,  \quad \theta_h[ w_h](z_E)\cdot t_E = \nabla w_h(z_E) \cdot t_E
\end{align*}
for all vertices $z \in \mathcal{N}_h$, all edges $E\in \mathcal{E}_h$ with $t_E$ denoting a unit tangent vector on $E$, and $z_E$ the midpoint of $E$.
We use superscripts $(\theta^j_h[w_h])_{j=1,2}$
to indicate the components of $\theta_h[w_h]$ corresponding to an approximation of $\partial_j w_h$.
The operator $\theta_h$ can analogously be defined on $H^3(\omega)$.
This operator has the following properties (\cf~Bartels~\cite{Ba11} and the textbook by Braess~\cite{Br13} for the proofs):

There exists constants $c_0,\,c_1,\,c_2, \,c_3 > 0$ such that for $T \in \mathcal{T}_h$ with $h_T=\mathrm{diam(T)}$, 
$w\in H^3(T)$ and $w_h \in {\bf{W}}_h$  
\begin{align} 
\label{eq:interpolDKT}
&\Vert w-\mathcal{I}^{DKT} w \Vert_{H^m(T)}  \leq c_0 h_T^{3-m} \Vert w\Vert_{H^3(T)} \quad \text{for } m = 0,1,2,3 \,,\\
\label{eq:ia}
& c_1^{-1} \lznormT{D^{k+1}w_h} \leq \lznormT{D^k\theta_h[ w_h]} \leq c_1 \lznormT{D^{k+1}w_h} \quad \text{for } k = 0,1\,, \\
\label{eq:ib}
&\lznormT{\theta_h[ w_h] - \nabla w_h} \leq c_2 h_T \lznormT{D^2 w_h}\,,\\
\label{eq:ic}
&\lznormT{\theta_h[ w] - \nabla w} + h_T\lznormT{\nabla \theta_h[ w] - D^2 w} \leq c_3 h_T^2 \norm{w}_{H^3(T)}\,.
\end{align}
Furthermore, we have the following norm property:
The mapping $w_h \mapsto \lznormO{\nabla \theta_h[ w_h]}$ defines a norm on
$\left\{ w_h \in {\bf{W}}_h  \; \vert \; \ w_h(z)=0,\ \nabla w_h(z) = 0 \text{ for all }z \in \mathcal{N}_h \cap \Gamma_D \right\} $.

Based on the DKT element we are now able to discretize the total free energy \eqref{eq:transTotalEnergy}.
More precisely, we consider $\psi_h \in {\bf{W}}_h^3$ to approximate a parametrization $\psi \in H^2(\omega;\R^3)$.
Then, a discrete unit normal field $n[\psi_h]$ is defined as
$n[\psi_h] \coloneqq \frac{1}{|\partial_{1}\psi_h\times \partial_{2}\psi_h|}\partial_{1}\psi_h\times \partial_{2}\psi_h$,
and we apply $\nabla \theta_h[\psi_h]$ as a discrete (non-conforming) approximation of the Hessian $D^2 \psi_h$ in $L^2(\omega; \R^{3,2,2})$.
Furthermore, for the given fixed parametrization $\psi_A\in H^3(\omega; \R^3)$ define 
$\psi_{A,h} = \mathcal{I}^{DKT} \psi_A \in {\bf{W}}_h^3$.
Then, we define
$g_h = (\nabla \psi_{A,h})^\top\nabla \psi_{A,h}$,
$\secFF_h = \nabla \theta_h[\psi_{A,h}]\cdot n[\psi_{A,h}]$.
Let us assume that $\psi_A\in H^3(\omega; \R^3)$ and $f_h\in L^2(\omega; \R^3)$. 
In particular, the above coefficients depending on $\psi_A$ are well approximated
by their discrete counterparts obtained by the interpolation in ${\bf{W}}_h^3$ 
taking into account the estimates on the discrete gradient operator.
Note that instead of a DKT interpolation $\psi_{A,h}$, we could directly consider $\psi_A$ evaluated at quadrature points.
However, if one applies the presented approach to shape optimzation (cf.~\cite{BuClCo21}) one usually wants to optimize
$\psi_A$. Then, a DKT discretization $\psi_{A,h}$ would enable to actually perform such an optimization.
Now, the discrete transformed bending energy density is given by
\begin{align} \label{eq:transformedRelShapeOpDiscrete}
 \transRelShapeOpth{\psi_h} \coloneqq
  \sum_{m=1}^{3}|g_h^{-\frac12}\nabla \theta_h[\psi_h^m]g_h^{-\frac12}|^2 -2 (\nabla \theta_h[\psi_{h}]\cdot n[\psi_{h}]) :(g_h^{-1} \secFF_hg_h^{-1})
\end{align}
Correspondingly, the discrete total free energy is defined as
\begin{align} \label{eq:discreteEnergy}
 E_h[\psi_h]=
   \frac{\alpha}{2} \int_\omega \sqrt{\det g_h}  \transRelShapeOpth{\psi_h}  \d x
 - \int_\omega \sqrt{\det g_h}\ f_h \cdot \psi_h \d x \, .
\end{align}
We aim at minimizing this discrete energy over the following constraint set of discrete deformations
\begin{align*}
\mathcal{A}_h = \Big\{ \psi_h\in {\bf{W}}_h^3\, \Big\vert \, (\nabla \psi_h(z))^\top \nabla \psi_h(z) = (\nabla \psi_A(z))^\top \nabla \psi_A(z) \quad \forall z\in \mathcal{N}_h; \;&\\
\psi(z)=\psi_A(z), \nabla \psi(z) = \nabla \psi_A(z)\quad \forall z \in  \mathcal{N}_h \cap \Gamma_D &\Big\}.\nonumber
\end{align*}
Since $\psi_{A,h} \in \mathcal{A}_h$, this set is not empty.
In explicit, we require the metric constraint only on the nodes of the triangulation and the clamped boundary condition is applied solely on boundary nodes.

Now, we are in the position to formulate our main theorem on the approximation of large-scale isometric deformations of parametrized surfaces
minimizing the total free energy in case of clamped boundary conditions.
\begin{theorem}[convergence of discrete solutions]\label{theoapprox}
Let $(\mathcal{T}_h)_h$ be a sequence of uniformly regular triangulations of $\omega$ with maximal triangle diameter $h>0$.
Furthermore, let $\psi_A\in H^3(\omega; \R^3)$ and $f\in L^2(\omega; \R^3)$ and $(f_h)_h$ be a sequence of force fields in $L^2(\omega,\R^3)$ weakly converging to $f$ in $L^2(\omega,\R^3)$.
Assume that there exists a minimizer $\psi_B$ of the 
continuous total free energy 
$E[\cdot]$ \eqref{eq:transTotalEnergy} on $\mathcal{A}$ which can be approximated in $H^2(\omega; \R^3)$ by functions $\psi_\varepsilon \in H^3(\omega; \R^3) \cap \mathcal{A}$.
Then, for every $h \leq \bar h$, for $\bar h$ sufficiently small,
there exists a minimizer $\psi_h \in {\bf{W}}_h^3$ of the discrete total free energy $E_h[\cdot]$~\eqref{eq:discreteEnergy} on $\mathcal{A}_h$.
Furthermore, for $(E_h[\cdot])_h$ with $h \to 0$, let $(\psi_h)_h $ be a sequence of minimizers. Then
$$\norm{\theta_h[\psi_h]}_{H^1(\omega; \R^{3\times 2})} + \norm{ \psi_h}_{H^1(\omega; \R^{3})}\leq C$$
and there exists a subsequence which converges strongly in
$H^1(\omega;\R^3)$ to some\\ $\psi^\ast \in H^2(\omega;\R^3) \cap \mathcal{A}$.
Furthermore, $\psi^\ast$ is a minimizer of the energy $E[\cdot]$
defined in \eqref{eq:transTotalEnergy} on $\mathcal{A}$.
\end{theorem}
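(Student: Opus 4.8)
The plan is to follow the discrete-to-continuous convergence strategy of Bartels~\cite{Ba11}, organised as a combined coercivity/compactness argument together with matching $\liminf$ and $\limsup$ inequalities, taking care throughout of the non-conforming nature of the DKT Hessian $\nabla\theta_h$ and of the variable metric $g_A$. I would first establish a uniform a priori bound, which simultaneously delivers discrete existence. Mimicking the proof of Theorem~\ref{thm:existence}, I bound $E_h$ from below: by Cauchy--Schwarz and $|n[\psi_h]|=1$ the cross term is controlled by $\lznormO{\nabla\theta_h[\psi_h]}$, and since the interpolation estimate \eqref{eq:ic} guarantees that $g_h$, $g_h^{-\frac12}$ and $\sqrt{\det g_h}$ converge uniformly to their continuous counterparts (hence are uniformly bounded above and below for $h$ small), one obtains
$$E_h[\psi_h] \ge c\,\lznormO{\nabla\theta_h[\psi_h]}^2 - C\big(\lznormO{\nabla\theta_h[\psi_h]} + \norm{\psi_h}_{L^2(\omega)}\big).$$
Applying the norm property of $w_h\mapsto\lznormO{\nabla\theta_h[w_h]}$ to $\psi_h-\psi_{A,h}$, which vanishes together with its gradient at the clamped nodes, furnishes a discrete Poincaré inequality controlling $\norm{\psi_h}_{H^1(\omega)}$, so that Young's inequality absorbs the lower-order terms and yields $E_h[\psi_h]\ge c\,\lznormO{\nabla\theta_h[\psi_h]}^2-C$. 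Because $\mathcal{A}_h$ is a finite-dimensional, closed, affine constraint set on which $E_h$ is continuous (the discrete normal is well defined for $\bar h$ small thanks to the nodal metric constraint and the non-degeneracy of $\nabla\psi_A$), coercivity confines minimising sequences to a compact set and a minimiser $\psi_h$ exists; comparison with $\psi_{A,h}\in\mathcal{A}_h$ then gives $E_h[\psi_h]\le C$ and, via \eqref{eq:ia}, the asserted bound on $\norm{\theta_h[\psi_h]}_{H^1(\omega;\R^{3\times2})}+\norm{\psi_h}_{H^1(\omega;\R^{3})}$.

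Next I would extract a convergent subsequence. From the $H^1$ bounds, $\theta_h[\psi_h]$ is bounded in $H^1$ and hence precompact in $L^2$ by Rellich--Kondrachov; since \eqref{eq:ib} forces $\lznormO{\theta_h[\psi_h]-\nabla\psi_h}\to0$, the gradients $\nabla\psi_h$ are precompact in $L^2$ as well, and together with the $L^2$-precompactness of $\psi_h$ this gives strong convergence $\psi_h\to\psi^\ast$ in $H^1(\omega;\R^3)$ along a subsequence. Identifying limits, $\theta_h[\psi_h]\rightharpoonup\nabla\psi^\ast$ weakly in $H^1$, so $\psi^\ast\in H^2(\omega;\R^3)$ and the discrete Hessian satisfies $\nabla\theta_h[\psi_h]\rightharpoonup D^2\psi^\ast$ weakly in $L^2$. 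The admissibility of $\psi^\ast$ is the step I expect to be the \emph{main obstacle}: the defect $(\nabla\psi_h)^\top\nabla\psi_h-(\nabla\psi_A)^\top\nabla\psi_A$ is a piecewise polynomial vanishing at all nodes whose derivatives are controlled by $\nabla\theta_h[\psi_h]$ and $D^2\psi_{A,h}$, so a scaled interpolation estimate shows it tends to zero in $L^1(\omega)$; combined with the a.e.\ convergence $\nabla\psi_h\to\nabla\psi^\ast$ this promotes the \emph{nodal} constraint to the pointwise a.e.\ identity $(\nabla\psi^\ast)^\top\nabla\psi^\ast=(\nabla\psi_A)^\top\nabla\psi_A$. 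The nodal clamped conditions pass to the limit by the trace theorem, whence $\psi^\ast\in\mathcal{A}$.

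Finally I would establish optimality by sandwiching $\lim_h E_h[\psi_h]$ between $E[\psi^\ast]$ and $E[\psi_B]$. For the $\liminf$ inequality I rely on weak lower semicontinuity of the convex leading term: with $g_h\to g_A$ uniformly and $\nabla\theta_h[\psi_h]\rightharpoonup D^2\psi^\ast$, the weighted $L^2$ seminorm $\sum_m|g_A^{-\frac12}D^2\psi^m g_A^{-\frac12}|^2$ is weakly lsc, while the remaining terms converge since $\nabla\theta_h[\psi_h]\cdot n[\psi_h]\rightharpoonup D^2\psi^\ast\cdot n[\psi^\ast]$ (a product of a weakly $L^2$-convergent factor and an $L^\infty$-bounded, a.e.\ convergent one), the coefficient $\sqrt{\det g_h}\,g_h^{-1}\secFF_h g_h^{-1}$ converges strongly by \eqref{eq:ic}, and $f_h\rightharpoonup f$ tests against the strongly convergent $\psi_h$; hence $E[\psi^\ast]\le\liminf_h E_h[\psi_h]$. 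For the matching $\limsup$ I use the recovery sequence supplied by the hypothesis: for each approximation $\psi_\varepsilon\in H^3(\omega;\R^3)\cap\mathcal{A}$ of $\psi_B$, the interpolant $\mathcal{I}^{DKT}\psi_\varepsilon$ reproduces nodal values and nodal gradients of $\psi_\varepsilon$, so it satisfies the nodal metric and clamped constraints exactly and lies in $\mathcal{A}_h$, and the estimates \eqref{eq:interpolDKT} and \eqref{eq:ic} give $E_h[\mathcal{I}^{DKT}\psi_\varepsilon]\to E[\psi_\varepsilon]$. Minimality of $\psi_h$ then yields $\limsup_h E_h[\psi_h]\le E[\psi_\varepsilon]$, and letting $\varepsilon\to0$ with the $H^2$-continuity of $E$ along $\psi_\varepsilon\to\psi_B$ gives $E[\psi^\ast]\le E[\psi_B]=\min_{\mathcal{A}}E$. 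Since $\psi^\ast\in\mathcal{A}$, it is itself a minimiser, which completes the argument.
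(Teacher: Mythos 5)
Your proposal is correct and follows essentially the same route as the paper: coercivity of $E_h$ via the DKT estimates \eqref{eq:ia}--\eqref{eq:ic} yielding discrete existence and the uniform bound, compactness and identification of $\psi^\ast \in H^2(\omega;\R^3)\cap\mathcal{A}$ through \eqref{eq:ib} and nodal interpolation estimates for the metric defect, weak lower semicontinuity for the $\liminf$ inequality, and a recovery sequence built by DKT interpolation of the assumed $H^3(\omega;\R^3)\cap\mathcal{A}$ approximants of $\psi_B$ followed by a diagonal argument. The only noteworthy difference is in the coercivity step, where you control $\norm{\psi_h}_{L^2(\omega)}$ by a discrete Poincar\'e inequality for $\psi_h-\psi_{A,h}$ (which needs \eqref{eq:ia}, \eqref{eq:ib} and Poincar\'e for $\theta_h$, not merely the stated norm property), whereas the paper additionally exploits the nodal metric constraint together with an inverse estimate to bound $\lznormO{\nabla \psi_h}$; both variants suffice for the Young absorption.
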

Finally, let us remark that in the flat case $g_A= I_2$ with $\psi_B\in H^2(\omega; \R^3)$ and $g_B = I_2$    
the mapping $\psi_B$ can be approximated in the strong $H^2$-topology by smooth isometries 
as shown by Hornung in \cite{Ho11}. 
We also refer to the monograph by Bartels~\cite{Ba15} for further properties of isometries in the flat case.
In the curved case, to the best of our knowledge, such a density result is unclear, since the proof in the flat case is based on the developability by Hartman and Nirenberg in \cite{HaNi59}.
\begin{proof}
The general procedure of this proof follows the basic procedure of the convergence proof given in \cite{Ba11} for the case of plates and in \cite{BaBoNo17} for the case of bilayer plates and in \cite{BoGuNo21} for prestrained plates and uses
$\Gamma$-convergence arguments.
With a slight misuse of notation we do not perform a reindexing when subsequences are selected.
Let $\mathcal{I}_h[\cdot]$ be the nodal interpolation operator mapping into the space of piece-wise affine, globally continuous functions in $\mathcal{T}_h$.

At first, using similar arguments as in the proof of Theorem~\ref{thm:existence} 
we can bound the discrete energy $E_h[\psi_h]$ for $\psi_h \in {\bf{W}}_h^3$ from below
\begin{align}\label{eq:Eest}
E_h[\psi_h] \geq c \norm{\nabla \theta_h[\psi_h]}_{L^2(\omega)}^2 
\!\!- C \!\left( \norm{\nabla \theta_h[\psi_h]}_{L^2(\omega)}\norm{n[\psi_h]}_{L^2(\omega)}
\!\!+ \! \norm{f}_{L^2(\omega)}\norm{\psi_h}_{L^2(\omega)}\right),
\end{align}
where $\norm{n[\psi_h]}_{L^2(\omega)}^2$ equals the area of  $\omega$.
Based on the nodal metric constraint, which implies $|\nabla \psi_h(z)|^2 = |\nabla \psi_A(z)|^2$ for all $z\in \mathcal{N}_h$, and applying an inverse inequality, see \cite{Br13}, we obtain for all $T\in \mathcal{T}_h$:
\begin{align}
& \norm{|\nabla \psi_h|^2 - \mathcal{I}_h[|\nabla \psi_A|^2]}_{L^1(T)} \leq C h_T^2 \norm{D^2(|\nabla \psi_h|^2)}_{L^1(T)}\label{eq:interpolestim}\\  &\leq C h_T^2 \left(\norm{D^3\psi_h}_{L^2(T)} \lznormT{\nabla \psi_h} + \lznormT{D^2\psi_h}^2\right) \leq C h_T\norm{D^2\psi_h}_{L^2(T)} \lznormT{\nabla \psi_h}.\nonumber
\end{align}
Now, using the  triangle inequality, Young's inequality, the nodal metric constraint,  and the norm equivalence estimates \eqref{eq:ia} we obtain
\begin{align*}
\lznormT{\nabla \psi_h}^2 \leq C h_T \left(\lznormT{\nabla \theta_h[\psi_h]}^2 + \lznormT{\theta_h[\psi_h]}^2\right) + \norm{\mathcal{I}_h[|\nabla \psi_A|^2]}_{L^1(T)} \, ,
\end{align*}
and with summation over all $T\in \mathcal{T}_h$ we get
\begin{align*}
 \lznormO{\nabla \psi_h}^2 \leq C h \left(\lznormO{\nabla \theta_h[\psi_h]}^2 + \lznormO{\theta_h[\psi_h]}^2\right) + \norm{\mathcal{I}_h[|\nabla \psi_A|^2]}_{L^1(\omega)}.
\end{align*}
Taking into account the clamped boundary conditions and applying Poincar\'e's inequality for $\theta_h[\psi_h]$ we  achieve
$
\lznormO{\nabla \psi_h}^2 \leq C h \lznormO{\nabla \theta_h[\psi_h]}^2 + C\,.
$
Now, applying Poincar\'e's inequality for $\psi_h$ and Young's inequality, we obtain
$\norm{\psi_h}_{L^2(\omega)} \leq C (1+h  \lznormO{\nabla \theta_h[\psi_h]})$.
Thus, using that $\lznormO{f_h}$ is uniformly bounded, and using again Young's inequality for the term $C\norm{\nabla \theta_h[\psi_h]}_{L^2(\omega)}\norm{n[\psi_h]}_{L^2(\omega)}$ we obtain
\begin{equation}\label{coercOfDiscEn}
E_h[\psi_h] \geq c \norm{\nabla \theta_h[\psi_h]}_{L^2(\omega)}^2 - C
\end{equation}
 for $h$ small enough. From this, the continuity of  $E_h[\cdot]$ on ${\bf{W}}^3_h$, and the 
norm property of  $\psi_h \mapsto \lznormO{\nabla \theta_h[\psi_h]}$ the existence of a minimizer $\psi_h$ of
$E_h[\cdot]$ follows for $h$ sufficiently small and $\lznormO{\nabla \theta_h[\psi_h]} \leq C$.
Then, Poincar\'e's inequality yields $\lznormO{\theta_h[\psi_h]} \leq C$.
Applying once more the norm equivalence estimates \eqref{eq:ia} we obtain $\lznormO{\nabla \psi_h} \leq C$\,.

Now, we consider the $\liminf$ inequality. 
By reflexivity of $H^1$,
there exist functions $\psi^\ast \in H^1(\omega; \R^3)$ and $\theta^\ast \in H^1(\omega; \R^{3\times 2})$, 
such that (up to subsequences) $\psi_h$ converges weakly to $\psi^\ast$ in $H^1(\omega; \R^3)$ and
$\theta_h[\psi_h]$ converges weakly to $\theta^\ast$ in $H^1(\omega; \R^{3\times 2})$\,.
Furthermore, one observes by \eqref{eq:ib}
$$\lznormO{\nabla \psi_h - \theta_h[\psi_h]} \leq c h \lznormO{\nabla \theta_h[\psi_h]} \leq Ch\,.$$
By the Rellich--Kondrachov theorem, $\theta_h[\psi_h]$ converges strongly to $\theta^\ast$ in $L^2(\omega;\R^3)$ for another subsequence.
Thus, the strong convergence $\nabla \psi_h$ to $\theta^\ast$ and the weak convergence of  $\nabla \psi_h$ to $\nabla \psi^\ast$ yields $\nabla \psi^\ast = \theta^\ast$
and in particular $\psi^\ast \in H^2(\omega; \R^3)$.
The continuity of the trace operator $H^2(\omega; \R^3) \to H^1(\Gamma_D; \R^{3})$ 
and interpolations estimates imply that $\psi^\ast$ fulfills the clamped boundary conditions.
To verify that $\psi^\ast$ fulfills the metric constraint we estimate
\begin{align*}
&\norm{(\nabla \psi_h)^\top \nabla \psi_h - (\nabla \psi_A)^\top \nabla \psi_A}_{L^1(T)}\\ &\leq
\norm{(\nabla \psi_h)^\top \nabla \psi_h - \mathcal{I}_h[(\nabla \psi_A)^\top \nabla \psi_A]}_{L^1(T)}
\!\!+ \!\norm{\mathcal{I}_h[(\nabla \psi_A)^\top \nabla \psi_A] - (\nabla \psi_A)^\top \nabla \psi_A}_{L^1(T)}\\
&\leq C h_T \lznormT{D^2 \psi_h}\lznormT{\nabla \psi_h}
+ C h_T^2\left( \lznormT{D^3 \psi_A}\lznormT{\nabla \psi_A} + \lznormT{D^2\psi_A}^2\right)\,.
\end{align*}
Here, we applied similar interpolation error estimates as in \eqref{eq:interpolestim}.
Summation over $T\in \mathcal{T}_h$ and the fact that $\nabla \psi_h \to \nabla \psi^\ast$ strongly in $L^2$ finally imply that
$(\nabla \psi^\ast)^\top \nabla \psi^\ast = (\nabla \psi_A)^\top \nabla \psi_A$ a.e. in $\omega$\,.
Since $\left(n[\psi_h]\right)_h$ is a bounded sequence in $L^2(\omega, \R^3)$
and $\nabla \psi_h$ converges point-wise to $\nabla \psi^\ast$ a.e.  $n[\psi_h] \to n[\psi^\ast]$ in $L^\infty(\omega, \R^3)$.
Furthermore, due to interpolation estimates, $g_h^{-\frac12}$, $g_h^{-1}$ and  $\secFF_h$
converge strongly to $g_A^{-\frac12}$, $g_A^{-1}$ and $\secFF_A$, respectively.
Altogether, recalling the definitions \eqref{eq:transformedRelShapeOp} and \eqref{eq:transformedRelShapeOpDiscrete},
we finally achieve the $\liminf$-inequality
\begin{align*}
E[\psi^\ast] \leq \liminf\limits_{h\to 0}E_h[\psi_h].
\end{align*}
With respect to the definition of a recovery sequence, we consider a function $\psi\in H^3(\omega; \R^3) \cap \mathcal{A}$. For $h>0$, let $\psi_{h}= \mathcal{I}^{DKT} \psi \in {\bf{W}}_h^3$ be the interpolation of $\psi$ defined on every triangle $T\in \mathcal{T}_h$ by $\psi_h(z) = \psi(z)$ and $\nabla \psi_h(z) = \nabla \psi(z)$ for all vertices $z\in \mathcal{N}_h \cap T$.
Taking into account \eqref{eq:interpolDKT}, \eqref{eq:ia}, and \eqref{eq:ic} 
we have for every $T\in \mathcal{T}_h$
\begin{align}\label{eq:interpol} 
\nonumber
&\lznormT{\theta_h[ \psi_{h}] - \nabla \psi} + h_T\lznormT{\nabla \theta_h[ \psi_{h}] - D^2 \psi} \\
\nonumber
&\leq \lznormT{\theta_h[ \psi_{h}-\psi]} + \lznormT{\theta_h[\psi]-\nabla \psi} \\
&\qquad + h_T \left( \lznormT{\nabla \theta_h[ \psi_{h}-\psi]}+   \lznormT{ \nabla \theta_h[\psi]- D^2 \psi}\right)  \leq c_3 h_T^2 \norm{\psi}_{H^3(T)}.
\end{align}
Using the estimate
$$\left\vert \frac{a}{\vert a \vert} - \frac{b}{\vert b\vert}\right\vert^2
= 2 \left(1- \frac{\vert b\vert}{\vert a\vert} + \frac{(b-a)\cdot b}{\vert a\vert\,\vert b\vert}\right)
\leq 2 \left( \frac{\vert a\vert-\vert b\vert}{\vert a\vert} + \frac{\vert b-a\vert}{\vert a\vert}\right)
\leq 4 \frac{\vert b-a\vert}{\vert a\vert}
$$
for $a=\partial_{1} \psi \times \partial_{2} \psi$,  $b =\partial_{1} \psi_h \times \partial_{2} \psi_h$,
and the identity $|\partial_{1} \psi \times \partial_{2} \psi| = \sqrt{\det((\nabla \psi)^\top \nabla \psi)}  = \sqrt{\det g_A}$
which follows from the metric constraint, 
we get
\begin{equation*}
 \intomega{ \sqrt{\det g_A} \ |n[\psi] - n[\psi_{h}]|^2 } \leq 4\intomega{|(\partial_{1}{\psi}_h\times \partial_{2}{\psi}_h)-(\partial_{1} \psi \times \partial_{2} \psi)|}.
\end{equation*}
Furthermore, by the interpolation estimate \eqref{eq:interpolDKT} we obtain
\begin{align*}
&\norm{\partial_{1}\psi_m \partial_{2}\psi_l - \partial_{1}{\psi_{h,m}}\partial_{2}{\psi_{h,l}}}_{L^1(\omega)}\\ 
&\leq \norm{\partial_{1}\psi_m - \partial_{1}{\psi_{h,m}}}_{L^2(\omega)}\norm{\partial_{2}\psi_{l}}_{L^2(\omega)} + \norm{\partial_{1}{\psi_{h,m}}}_{L^2(\omega)} \norm{\partial_{2}\psi_l - \partial_{2}{\psi_{h,l}}}_{L^2(\omega)} \leq C h^2 \norm{\psi}_{H^3(\omega)}^2
\end{align*}
and hence
$$\left(\intomega{\sqrt{\det g_A}\ \big|n[\psi] - n[\psi_h]\big|^2 }\right)^{\frac{1}{2}} \leq  C h \norm{\psi}_{H^3(\omega)}\,.$$
Now, let $\psi_B \in \mathcal{A}$ be a minimizing isometry for $E[\cdot]$. In~Theorem~\ref{thm:existence}, it is shown that such a minimizer exists.
By our assumption, we have
\begin{align*}
 \forall \varepsilon > 0 \ \exists \psi_\varepsilon \in H^3 (\omega;\R^3)\cap \mathcal{A}:\ \norm{\psi_B - \psi_\varepsilon}_{H^2(\omega;\R^3)} < \varepsilon\,.
\end{align*}
Applying the above estimates to $\psi_\varepsilon$ and its interpolation $\psi_{\varepsilon,h} = \mathcal{I}^{DKT} \psi_\varepsilon$ in
${\bf{ W}}_h^3$ and using the estimates~\eqref{eq:interpolDKT}, \eqref{eq:ic}  and \eqref{eq:interpol} we achieve
\begin{align} \label{eq:uniform}
&\intomega{\sqrt{\det g_h}\   \transRelShapeOpth{\psi_{\varepsilon,h}}  } 
\leq \intomega{\sqrt{\det g_A}\   \transRelShapeOpt{\psi_{\varepsilon}} } 
+ C h \norm{\psi_\varepsilon}_{H^3}\\ 
&\leq \intomega{\sqrt{\det g_A}\  \transRelShapeOpt{\psi_{B}}  } 
+ C \left(  \varepsilon +   h \norm{\psi_\varepsilon}_{H^3}\right)\nonumber
\end{align}
Now, we choose $h=h(\varepsilon)$ small enough such that
\begin{align}\label{eq:heps}
h(\varepsilon) \norm{\psi_\varepsilon}_{H^3(\omega)} < \varepsilon\,.
\end{align}
and use for the estimation of the potential energy that $\psi_{\epsilon,h(\epsilon)}$ converges strongly to $\psi_B$ in $L^2$
to obtain 
\begin{align*}
	 \limsup\limits_{\varepsilon\to 0} E_{h(\varepsilon)}[\psi_{\varepsilon,h(\varepsilon)}] \leq E[\psi_B].
\end{align*}
Finally, we get
\begin{align*}
		E[\psi^\ast ]\leq \liminf\limits_{\epsilon\to 0} E_{h(\varepsilon)}[\psi_{h(\varepsilon)}] \leq \limsup\limits_{\varepsilon\to 0} E_{h(\varepsilon)}[\psi_{\varepsilon,h(\varepsilon)}] \leq E[\psi_B]= \min_{\tilde \psi \in \mathcal{A}} E[\tilde \psi] \leq E[\psi^\ast].
\end{align*}
Hence, $\psi^\ast$ is a minimizer of $E[\cdot]$.
\end{proof}
In fact, the coupling of $h$ and the $H^3$-norm of the approximations determines the rate of convergence. 
This rate cannot be predicted under the assumption of this theorem.

\section{Implementation via Newton's method} \label{sec:Newton}
Now, we will describe the numerical implementation to minimize the discrete total energy $E_h$ as defined in~\eqref{eq:discreteEnergy}
over all discrete isometries $\phi_h \in \mathcal{A}_h$.
First, we observe that a function
$\psi_h \in \mathcal{A}_h = \{ \phi_h \in {\bf{W}}_h^3\; \vert \; \phi_h(z)=\psi_A(z),\nabla \phi_h(z)= \nabla \psi_A(z) \text{ on }\Gamma_D \}$
is determined by its values at the nodes and the values of the gradient at the nodes.
So, for the  discrete constraint minimization problem, there are $9 \times \vert \mathcal{N}_h \setminus \Gamma_D \vert$ degrees of freedom.
To implement the nodal-wise metric constraint, we define the Lagrangian
\begin{align*}
 L_h[\psi_h,p_h] \coloneqq E_h[\psi_h] - G_h[\psi_h](p_h)\,.
\end{align*}
Here, $E_h[\psi_h]$ is the discrete total free energy and
\begin{align*}
    G_h[\psi_h](p_h) \coloneqq
    \intomega{\mathcal{I}_h\left(
    \left[
    \left(\nabla \psi_h(z)\right)^\top \nabla \psi_h(z)
    - \left(\nabla \psi_A(z)\right)^\top \nabla \psi_A(z)
    \right] : p_h
    \right)}
\end{align*}
with Lagrange multiplier $p_h \in {\bf S}^{2,2}_h$, where ${\bf S}^{2,2}_h$ denotes the space of continuous piece-wise affine, symmetric matrices in $\R^{2,2}$.
In particular, $G_h[\psi_h](p_h)=0$ for all $p_h \in {\bf S}^{2,2}_h$ is equivalent to an enforcement of the metric constraint on all nodes of the triangulation.
The saddle point conditions are
\begin{align*}
\partial_{\psi_h} L_h[\psi_h,p_h](v_h) = 0\,,\quad
\partial_{p_h} L_h[\psi_h,p_h](q_h) = 0
\end{align*}
for all  $v_h \in \{\phi_h \in {\bf{W}}_h^3\; \vert \; \phi_h(z)=0,\nabla \phi_h(z)= 0 \text{ on }\Gamma_D \}$ and for all $q_h \in {\bf S}^{2,2}_h$\,.
To compute a saddle point, we use the IPOPT software library presented in~\cite{WaBi06}.
More precisely, we apply a Newton scheme for the Lagrangian which requires the computation of the first and second variations of the discrete energy $E_h[\cdot]$ and of
$G_h[\cdot](\cdot)$, respectively.
In IPOPT this corresponds to setting 
``hessian\_approximation'' to ``exact''.
Here, we take into account the default backtracking strategy by setting ``line\_search\_method'' to ``filter''. As stopping criterion we set ``tol'' to $10^{-12}$.
For the required integral evaluations, we implemented a Gauss quadrature of degree 6 with 12 quadrature points.
For the ease of presentation, we consider the continuous Lagrangian
$$
L[\psi,p] = E[\psi] - G[\psi](p)
$$
with $G[\psi](p)=\intomega{\left((\nabla \psi)^\top \nabla \psi - (\nabla \psi_A)^\top \nabla \psi_A\right) : p}$
and provide first and second variations of $E[\cdot]$ and of $G[\cdot](\cdot)$, respectively. Here, $X:Y$ denotes the canonical scalar product for tensors $X$ and $Y$. The transfer to the discrete counterparts is straightforward.
The energy is given by
$E[\psi] = \frac{\alpha}{2} \intomega{\sqrt{\det g_A}  \transRelShapeOpt{\psi}}-\intomega{\sqrt{\det g_A} f \cdot \psi}$, where we can write $\transRelShapeOpt{\psi} = \sum_{m=1}^{3}\left(g_A^{-1}D^2\psi^mg_A^{-1}\right):D^2\psi^m-2\left(D^2\psi \cdot n[\psi]\right) :\left(g_A^{-1} \secFF_Ag_A^{-1}\right)$.
For the first and second variation we obtain
\begin{align*}
\partial_\psi E[\psi](v)  &=\frac{\alpha}{2} \intomega{  \sqrt{\det g_A}  \partial_\psi \transRelShapeOpt{\psi}(v)} -\intomega{\sqrt{\det g_A} f \cdot v}\,, \\
\partial_\psi^2 E[\psi](v,w) &= \frac{\alpha}{2}\intomega{  \sqrt{\det g_A}  \partial_\psi^2 \transRelShapeOpt{\psi}(v,w)}
\end{align*}
where
\begin{align*}
\partial_\psi \transRelShapeOpt{\psi} (v)=& 2\sum_{m=1}^{3}\left(g_A^{-1}D^2\psi^mg_A^{-1}\right):D^2v^m\\ &-2\left(D^2v \cdot n[\psi] + D^2\psi \cdot \partial_\psi n[\psi](v) \right) :\left(g_A^{-1} \secFF_Ag_A^{-1}\right)\,,\quad\\
\partial_\psi^2 \transRelShapeOpt{\psi}(v,w) =&2\sum_{m=1}^{3}\left(g_A^{-1}D^2w^mg_A^{-1}\right):D^2v^m\\ &-2 \left(D^2v \cdot \partial_\psi n[\psi](w) + D^2w \cdot \partial_\psi n[\psi](v) \right) :\left(g_A^{-1} \secFF_Ag_A^{-1}\right)\\
&-2 \left( D^2\psi \cdot \partial_\psi^2 n[\psi](v,w) \right) :\left(g_A^{-1} \secFF_Ag_A^{-1}\right)\,.
\end{align*}
To compute the first and second variation of the normal field $n[\psi]$,
we recall the definition of the metric $g[\psi] = (\nabla \psi)^\top\nabla \psi$
in the deformed configuration and observe that $|n[\psi]|^2 = 1$ implies $0 = \partial_\psi (|n[\psi]|^2)(v) = 2 n[\psi] \cdot \partial_\psi n[\psi](v)$. 
Hence, there exist $\alpha_1,\, \alpha_2\in \R$ s.t. $\partial_\psi n[\psi](v) = \alpha_1 \partial_1 \psi + \alpha_2 \partial_2 \psi$ and therefore
$\partial_\psi n[\psi](v)\cdot \partial_k \psi = \alpha_1 g[\psi]_{{k1}} +\alpha_2 g[\psi]_{{k2}}$.
Furthermore, $0= \partial_\psi (n[\psi] \cdot \partial_k \psi)(v) = \partial_\psi n[\psi](v)\cdot \partial_k \psi + n[\psi] \cdot \partial_k v$ implies
$g[\psi] (\alpha_1,\alpha_2)^\top =\nabla \psi^\top \partial_\psi n[\psi](v)= - \nabla v^\top n[\psi]$
and thus $(\alpha_1,\alpha_2)^\top = - g[\psi]^{-1} \nabla v^\top n[\psi]$\,. 
Finally, for the first variation of $n[\phi]$, we obtain
\begin{align*}
\partial_\psi n[\psi](v) = - \nabla \psi g[\psi]^{-1} \nabla v^\top n[\psi].
\end{align*}
For the second variation we obtain
\begin{align*}
\partial_\psi^2 n[\psi](v,w) = &- \nabla w g[\psi]^{-1} \nabla v^\top n[\psi] - \nabla \psi \partial_\psi \left(g[\psi]^{-1}\right)(w) \nabla v^\top n[\psi]\\ &- \nabla \psi g[\psi]^{-1} \nabla v^\top \partial_\psi n[\psi](w).
\end{align*}
where $\partial_\psi \left(g[\psi]^{-1}\right)(w)$ can be evaluated taking into account
\begin{align*}
0 = \partial_\psi \left(g[\psi]^{-1} g[\psi]\right)(w) = \partial_\psi \left(g[\psi]^{-1}\right)(w) g[\psi] + g[\psi]^{-1} \partial_\psi\left(g[\psi]\right)(w)
\end{align*}
and $\partial_\psi\left(g[\psi]\right)(w) = (\nabla w)^\top \nabla \psi+ (\nabla \psi)^\top \nabla w$, which implies
\begin{align*}
\partial_\psi \left(g[\psi]^{-1}\right)(w) =& - g[\psi]^{-1} \partial_\psi\left(g[\psi]\right)(w) g[\psi]^{-1}\\ =& -g[\psi]^{-1} \left( \nabla w^\top \nabla \psi + \nabla \psi^\top \nabla w \right)g[\psi]^{-1}\,.
\end{align*}
Based on this, we straightforwardly obtain
\begin{align*}
\partial_\psi G[\psi](p)(v) &=\intomega{\left((\nabla v)^\top \nabla \psi + (\nabla \psi)^\top \nabla v\right) : p}\,, \quad\\
\partial_\psi^2 G[\psi](p)(v,w) &=\intomega{\left((\nabla v)^\top \nabla w + (\nabla w)^\top \nabla v \right) : p}\,.
\end{align*}

We remark that a proof of convergence of the second order method would require invertibility of the Hessian $D^2 L$, which we have always obtained in our numerical computations.
However, note that the Hessian $D^2 E$ is in general not invertible.

Finally, note that an algorithmic generalization on multiple charts would be straightforward.
E.g. for two DKT charts $\psi_B^1, \psi_B^2$ of the deformed configuration corresponding to given DKT charts $\phi_A^1, \phi_A^1$,
which share degrees of freedom on the common boundary
$\mathcal{S} = \partial \mathcal{M}_A^1 \cap \partial \mathcal{M}_A^2$,
we require consistency of the DKT degrees of freedom, \ie $\psi_B^1(z) =  \psi_B^2(z)$ and $\nabla \psi_B^1(z) = \nabla \psi_B^2(z)$ for all $z \in \mathcal{S}_h$.

\section{Numerical Results} \label{sec:results}
In this section, the presented method is applied for specific choices of $\psi_A$, $f$ and $\omega$ and for $\alpha = \frac{1}{12}$.
In all our examples, we consider a sequence of triangulations on $\omega$, generated by uniform, regular (so called red) refinement starting from a coarse rectangular mesh with each rectangular cell subdivided into two triangles.
We use $\psi_A$ as initialization for $\psi_B$ on the coarsest mesh.
On a refined mesh, we use a prolongation of the result on the previous coarser mesh as an initialization.
In the first three examples, the surfaces are parametrized over the unit square $\omega = (0,1)\times (0,1)$ and the part of the boundary for the clamped boundary condition is set to $\Gamma_D = \{ 0 \} \times [0,1]$.
Furthermore, we will also consider an L-shaped parameter domain. Finally, an example with modified boundary conditions is shown.
\medskip

\noindent \textbf{(1) Square-shaped plate.} 
In the first experiment, the undeformed surface is a flat unit square in $\R^3$ with $\psi_A(x_1,x_2) = (x_1,x_2,0)^\top$ and $f(x_1,x_2) = (0,0,-0.1)^\top$.
Thus, $a_i^{kj} = 0$ for all $i \in \{ 0,1,2\}$ and $k,j\in \{1,2\}$.
Note that this flat case is already covered by Bartels~\cite{Ba11}.
However, as mentioned above, our numerical method differs by the enforcing of a nodal-wise metric constraint as in \cite{HoRuSi20}
instead of the linearization of the contraint in a gradient descent.
In~\ref{tab:plate}, for decreasing grid size $h$, the minimal discrete energy, the isometry error in $L^1$, the $L^1$-norm of the discrete Gauss-curvature $K_h[\psi_h] = \det(g[\psi_h]^{-1} \nabla \theta_h[\psi_h]\cdot n[\psi_h])$ with 
$\nabla \theta_h[\psi_h]\cdot n[\psi_h] = \left(\sum_{l=1}^3 n_l[\psi_h] \partial_k\theta_h^{j}[\psi_h^l]\right)_{k,j=1,2}$
and the $L^2$ approximate error in the Hessian of the energy are shown. 
Since we do not know the minimizer of this problem explicitly,
we compare the discrete Hessian of the discrete minimizer for grid size $h$ 
to the discrete Hessian of the finest solution with grid size $h^\ast = 0.0014$.
For a numerical quadrature, we prolongate functions on to the finest mesh.
We obtain an approximate linear convergence rate  for $\nabla \theta[\psi_h]$. 
This rate coincides with the rate for the DKT interpolation on $H^3(\omega,\R^3)$ as stated in~Section~\ref{sec:discrete}.
Note that this is the same convergence rates as obtained for the linearized gradient flow scheme in \cite{Ba11}.
Furthermore, the convergence of the isometry error is of second order, whereas theoretically we can only guarantee a linear convergence rate.
Compare here the results in Table~\ref{tab:saddleBdry}.
By Gauss' theorema egregium, a smooth surface isometric to the plate has a vanishing Gaussian curvature. 
Here, we observe that $K_h$ indeed approaches zero, with approximately linear order of convergence.
\begin{table}[htbp]
	\centering
	\begin{tabularx}{0.8\textwidth}{c c c c c}
		\toprule
		$h$
		& $E_h[\psi_h]$
		& $\norm{g[\psi_h] - g_A}_{L^1}$
		& $\norm{K_h[\psi_h] }_{L^1}$
		& $\norm{ \nabla \theta_h [\psi_h] - \nabla \theta_{h^\ast} [\psi_{h^\ast}]}_{L^2}$\\ 
		\midrule
		0.0442&0.00595312& 5.332e-05 & 0.0009403 &  0.00899 \\
		0.0221&0.00595271& 1.329e-05 & 0.0002892 &  0.003455  \\
		0.0111&0.00595210& 3.324e-06 & 0.000140  &  0.001711  \\
		0.0055&0.00595195& 8.310e-07 & 6.928e-05 &  0.000847  \\
		0.0028& 0.00595191& 2.078e-07 &3.438e-05 &  0.0004139  \\
		0.0014&0.00595190 & 5.194e-08 &1.712e-05 & -  \\
		\bottomrule
	\end{tabularx}
	\caption{Experimental convergence evaluation for example (1):  
	grid size, discrete energy, isometry error in $L^1$, $L^1$ 
	norm of the discrete Gaussian curvature, and approximate $L^2$ error for the hessian.}
	\label{tab:plate}
\end{table}
\medskip

\noindent \textbf{(2) Half Cylinder.} 
In the second experiment we consider
$$ \psi_A: [0,1]^2 \to \R^3; \ \psi_A(x_1,x_2) = \left(\pi^{-1} \sin ( \pi x_1 ), x_2, \pi^{-1} \cos ( \pi x_1)\right)^\top\,, $$
which isometrically parametrizes a half-cylinder as the undeformed configuration and apply the loads
\begin{align*}
	f_1(x_1,x_2) = (0,1,0)^\top \,,
	\quad f_2(x_1,x_2) = \begin{cases}
		(-8,1,0)^\top & \text{if } (x_1,x_2)^\top \in [0,\frac{1}{2}]\times [\frac{1}{2},1]\,, \\
		(0,1,0)^\top  &  \text{else}\, .
	\end{cases}
\end{align*}
In~\ref{tab:cyl}, we list the same quantities as for example (1), now for both loads $f_1$ and $f_2$.
Since the half cylinder is isometric to the plate, an isometric deformation of it should also have vanishing Gaussian curvature.
Here, we observe a less than linear experimental rate of convergence of the discrete Gaussian curvature, 
whereas the convergence of the discrete Hessian again appears to be linear. 
In~Figure~\ref{fig:cylDef}, the undeformed cylinder parametrized by $\psi_A$,
and the different discrete deformations of the half cylinder due to the two different loads are displayed from different perspectives for the numerical results on the finest grid size. 
Here, the elongated box attached to the surfaces illustrates the clamped boundary condition.
\begin{figure}[htbp]
	\resizebox{0.95\textwidth}{!}{
	\begin{tikzpicture}

			\node[inner sep=0pt] (Undeformed) at (5.2,1.65) {\includegraphics[width= 0.006\linewidth]{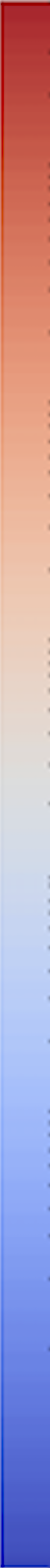}};
			\node[scale=0.7] at (5.8,3.25) {$1.1e-06$};
			\node[scale=0.7] at (5.8,1.5) {$2.0e-08$};
			\node[scale=0.7] at (5.8,0) {$6.7e-10$};
			\node[inner sep=0pt] (Undeformed) at (3.41,1.65) {\includegraphics[width= 0.18\linewidth]{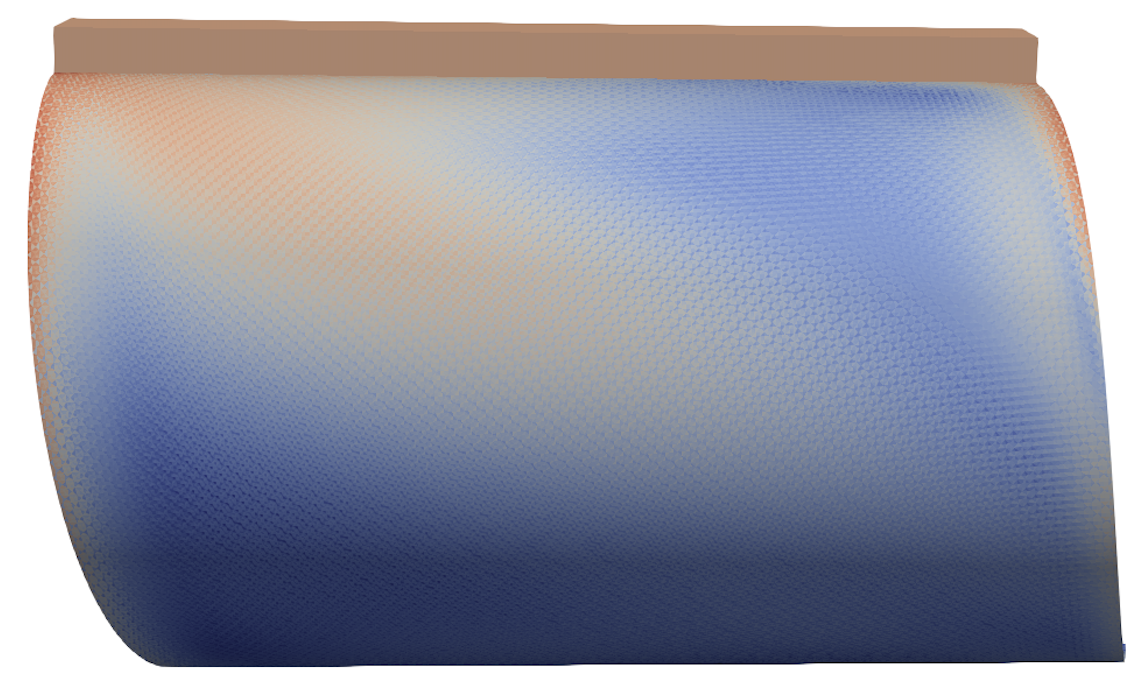}};

			\node[inner sep=0pt] (Undeformed) at (0.23,1.75) {\includegraphics[width= 0.18\linewidth]{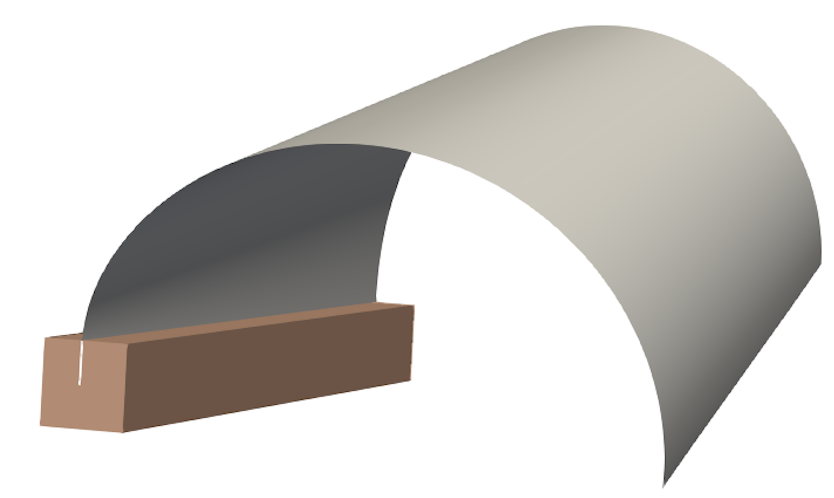}};
			\node[inner sep=0pt] (Undeformed) at (-3.0,1.75) {\includegraphics[width= 0.18\linewidth]{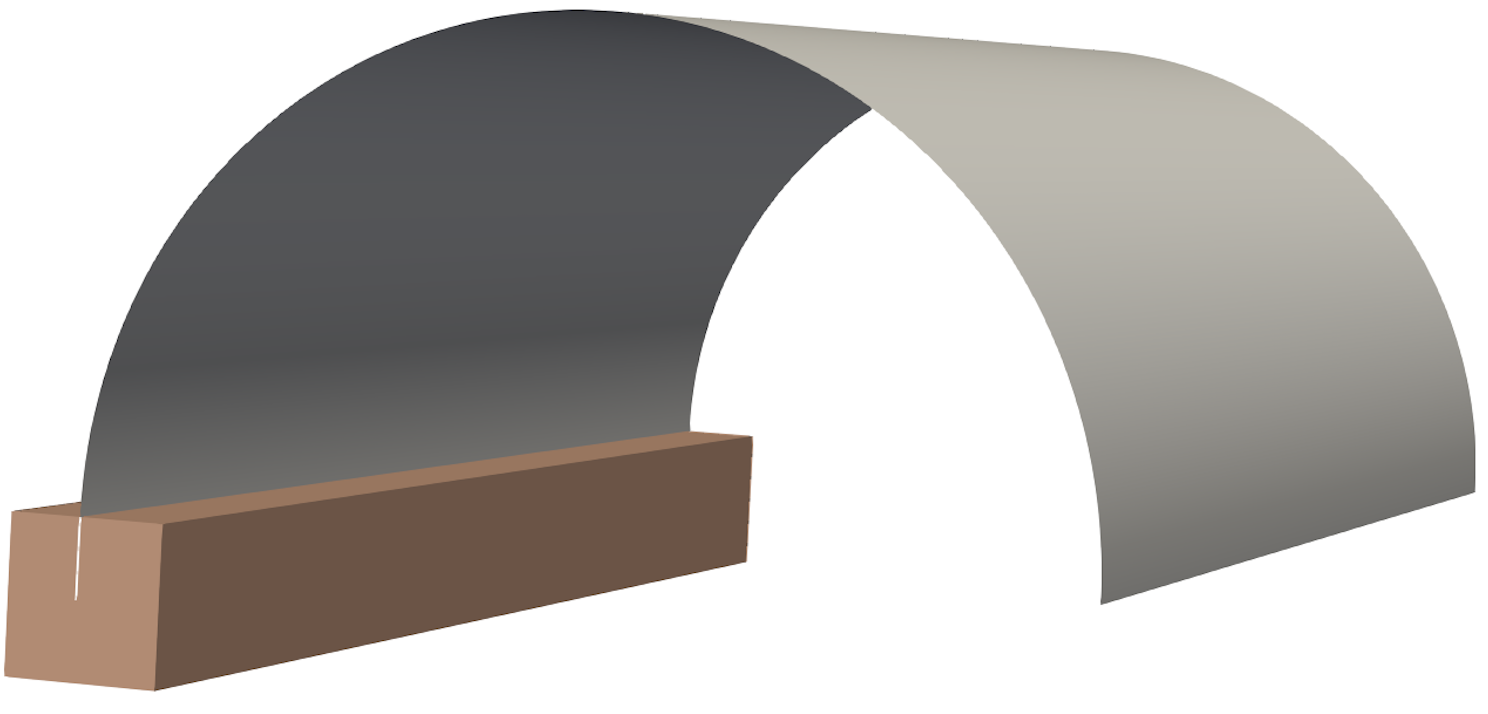}};
			\node[inner sep=0pt] (Undeformed) at (7.95,0.9) {\includegraphics[width= 0.18\linewidth]{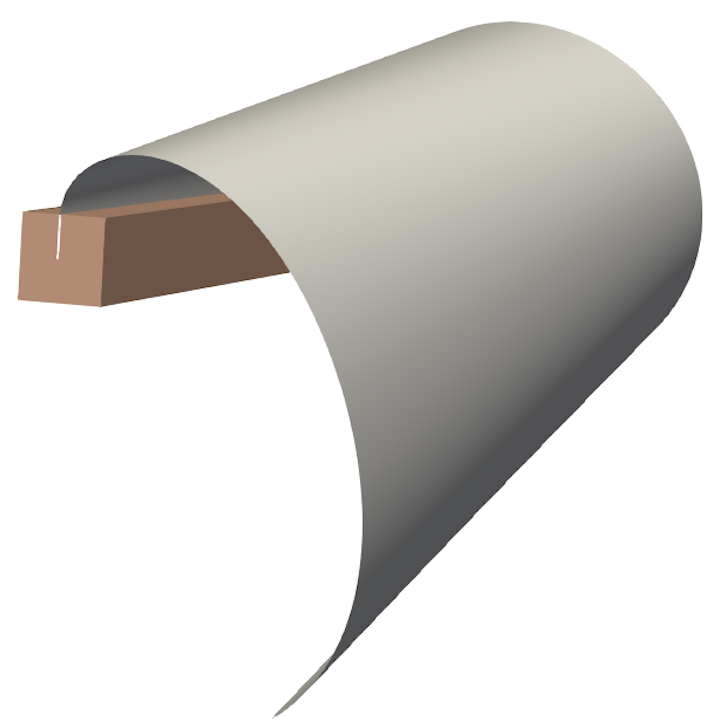}};
			\node[inner sep=0pt] (Undeformed) at (10.6,1.65) {\includegraphics[width= 0.1\linewidth]{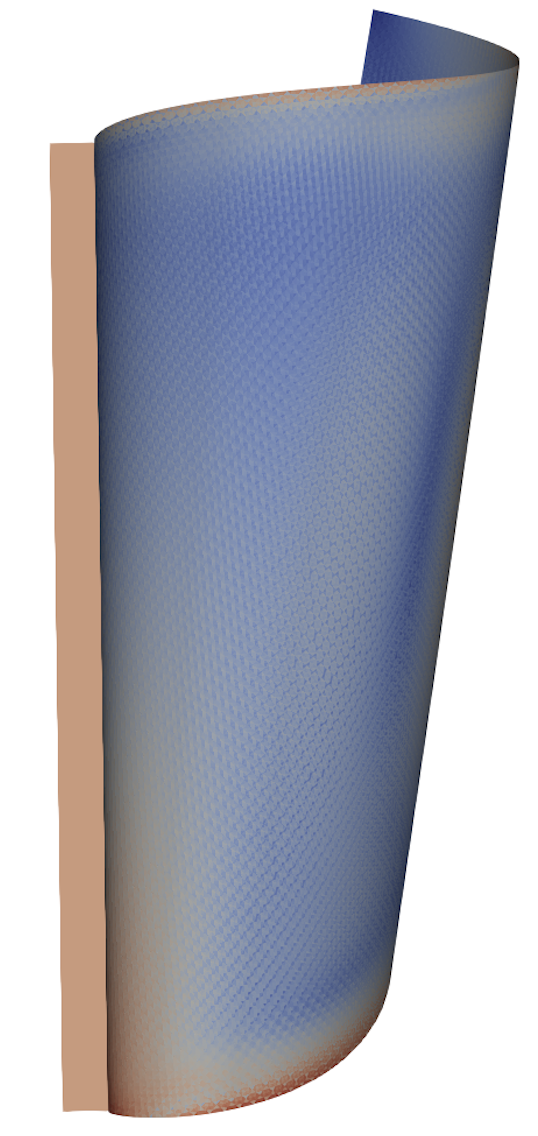}};
						\node[inner sep=0pt] (Undeformed) at (11.7,1.65) {\includegraphics[width= 0.006\linewidth]{images/skala.png}};
			\node[scale=0.7] at (12.3,3.25) {$9.1e-06$};
			\node[scale=0.7] at (12.3,1.45) {$1.0e-07$};
			\node[scale=0.7] at (12.3,0) {$2.8e-09$};

			\draw (-1.35,-0.8) -- (-1.35,3.5);
			\draw (6.4,-0.8) -- (6.4,3.5);
	\end{tikzpicture}}
	\caption{Left: Undeformed configuration for example (2).
	Middle: Deformed configuration for load $f_1$ and color-coded corresponding to an element-wise evaluation of 
	$\norm{ \nabla \theta_h [\psi_h] - \nabla \theta_{h^*} [\psi_{h^\ast}]}_{L^2(T)}$ for $h = 0.0028, h^\ast = 0.0014$ with associated deformation $\psi_{h^\ast}$ using logarithmic scaling. 
	Right: the same for load $f_2$.  }
	\label{fig:cylDef}
\end{figure}

\begin{table}[htbp]
	\centering
\begin{tabularx}{0.72\textwidth}{c c c c c c c}
	\toprule
	$h$
	& \multicolumn{2}{c}{$E_h[\psi_h]$}
	& \multicolumn{2}{c}{$\norm{K_h[\psi_h] }_{L^1}$}
	& \multicolumn{2}{c}{$\norm{ \nabla \theta_h [\psi_h] - \nabla \theta_{h^\ast} [\psi_{h^\ast}]}_{L^2}$}\\
	\cmidrule(lr){2-3}\cmidrule(lr){4-5} \cmidrule(lr){6-7}
	&$f_1$ & $f_2$ & $f_1$ & $f_2$ & $f_1$ & $f_2$\\
	\midrule
	0.0442&   0.0416  & 1.0628 & 0.1164 & 0.4478 & 0.2927 &  0.7181\\
	0.0221&   0.0386  & 1.0427 & 0.0703 & 0.2545 & 0.1497 &  0.3590\\
	0.0111&   0.0377  & 1.0367 & 0.0413 & 0.1537 & 0.0854 &  0.2064\\
	0.0055&   0.0376  & 1.0346 & 0.0249 & 0.0906 & 0.0489 &  0.1205\\
	0.0028&   0.0373  & 1.0338 & 0.0150 & 0.0523 & 0.0255 &  0.0625\\
	0.0014&   0.0372  & 1.0335 & 0.0093 & 0.0309 & -      &  -\\
	\bottomrule
\end{tabularx}
\caption{Experimental convergence evaluation for example (2): grid size, discrete energy, $L^1$ norm of the discrete Gaussian curvature and approximate $L^2$ error in the hessian for loads $f_1$ and $f_2$.}
\label{tab:cyl}
\end{table}

\medskip

\noindent \textbf{(3) Saddle-shaped surface.}
We consider a saddle-shaped surface 
as reference configuration parametrized via
\begin{align}\label{eq:expsiA}
\psi_A(x_1,x_2) &= \left( x_1, x_2, \tfrac{1}{2}\left( (x_1 -\tfrac{1}{2} )^2 -(x_2 - \tfrac{1}{2})^2 \right) \right)^\top \, .
\end{align}
over the unit square. Obviously, $\psi_A$ is no isometric deformation of $\omega$.
Figure~Figure~\ref{fig:defSaddle} shows the undeformed saddle and two different deformed configurations
for $f_1(x_1,x_2) = (0,0,-1)^\top$ and $f_2(x_1,x_2) =(0,0,-2.5)^\top$, respectively.
In~\ref{tab:saddle}, the discrete energies, and the experimental convergence of the discrete
Gaussian curvature and the discrete Hessian for decreasing grid size $h$ are shown for both forces.
As approximate ground truth, we consider again the evaluation on the finest grid.
Different to the first two examples, where the reference configurations are isometric to a planar domain (the plate and the half cylinder),
we observe a less than linear experimental order of convergence, both for the Gaussian curvature, and for the discrete Hessian.
Here, let us recall that \ref{theoapprox} only applies for functions which can be approximated by smooth isometries.
In fact, we can only guarantee $H^2$ regularity for a minimizer $\psi_B$ due to the lower bound for the continuous energy.
However, for the estimate~\eqref{eq:uniform}, we require an approximation of $\psi_B$ in $H^3$ which is isometric to $\psi_A$.
This approximation result was proven by Hornung~\cite{Ho11} in the flat case, where he essentially made use of the property that smooth isometries are developable. On this background a generalization of Hornung's result remains unclear in the general case of curved surfaces.
Here, we actually need the smooth approximation property as an additional assumption.
Furthermore, the dependence of $h$ on $\epsilon$ and the $H^3$ norm of the approximation in~\eqref{eq:heps} impacts the resulting convergence rate.

\begin{figure}[htbp]
	\centering
	\resizebox{0.95\textwidth}{!}{
	\begin{tikzpicture}
		\node[inner sep=0pt] (Undeformed) at (2.8,1.4) {\includegraphics[width=0.15\textwidth]{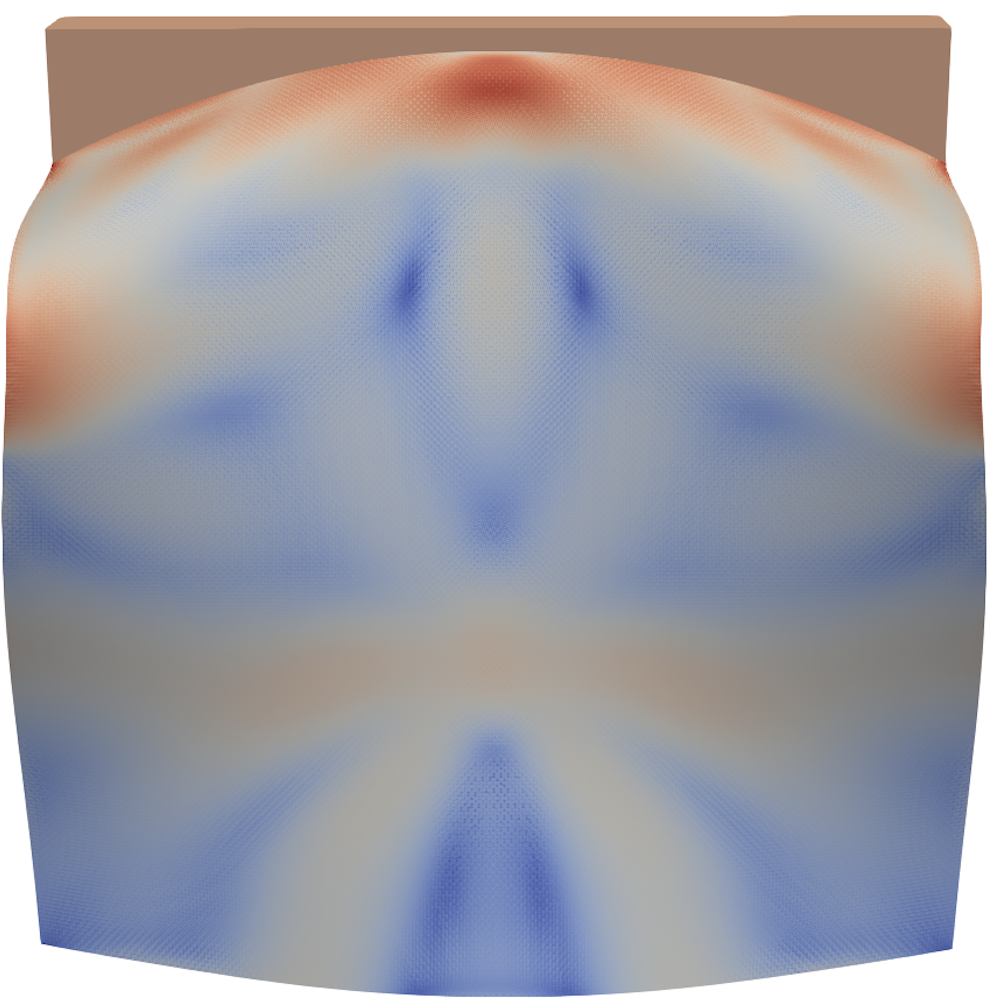}};
				\node[inner sep=0pt] (Undeformed) at (4.35,1.45) {\includegraphics[width= 0.006\linewidth]{images/skala.png}};
		\node[scale=0.7] at (4.95,3.05) {$4.1e-04$};
		\node[scale=0.7] at (4.95,1.5) {$5.0e-06$};
		\node[scale=0.7] at (4.95,-0.2) {$3.0e-08$};

		\node[inner sep=0pt] (Undeformed) at (-3.9,1.4) {\includegraphics[width= 0.13\linewidth]{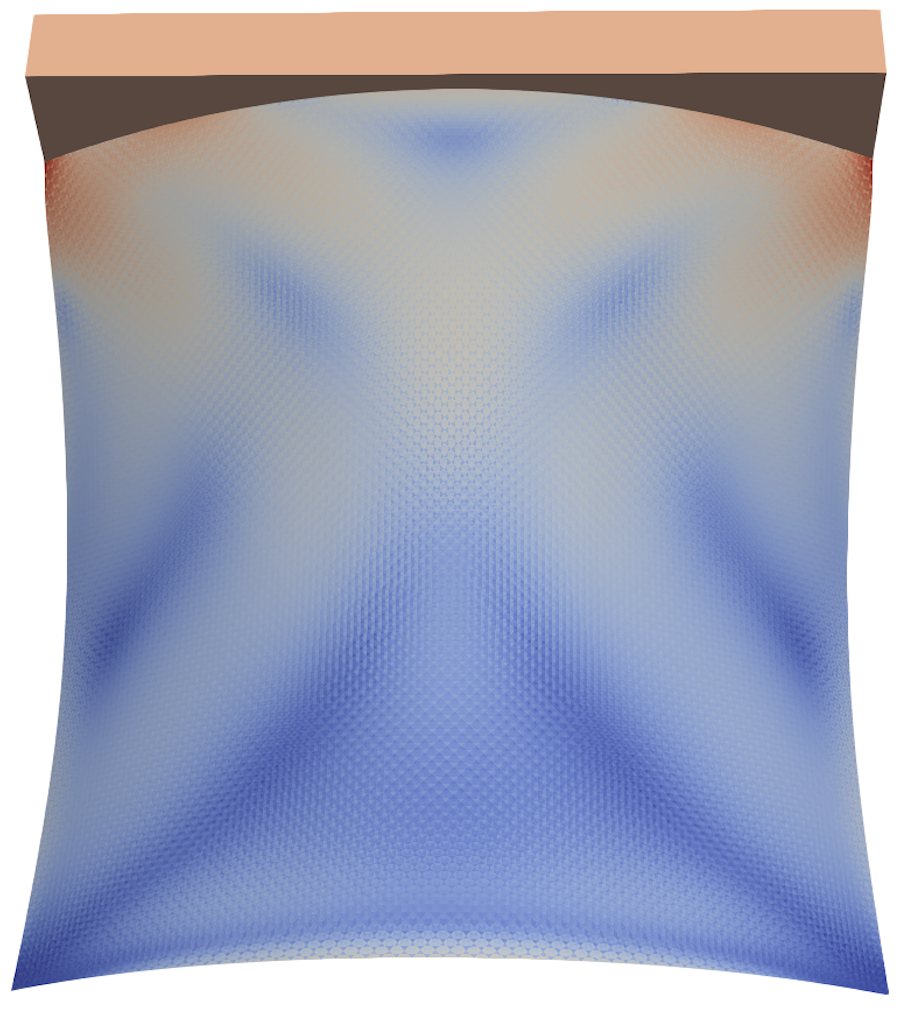}};
				\node[inner sep=0pt] (Undeformed) at (-2.5,1.45) {\includegraphics[width= 0.006\linewidth]{images/skala.png}};
		\node[scale=0.7] at (-1.9,3.05) {$2.4e-04$};
		\node[scale=0.7] at (-1.9,1.5) {$8.0e-07$};
		\node[scale=0.7] at (-1.9,-0.2) {$2.2e-09$};

		\node[inner sep=0pt] (Undeformed) at (-6.6,1.75) {\includegraphics[width= 0.18\linewidth]{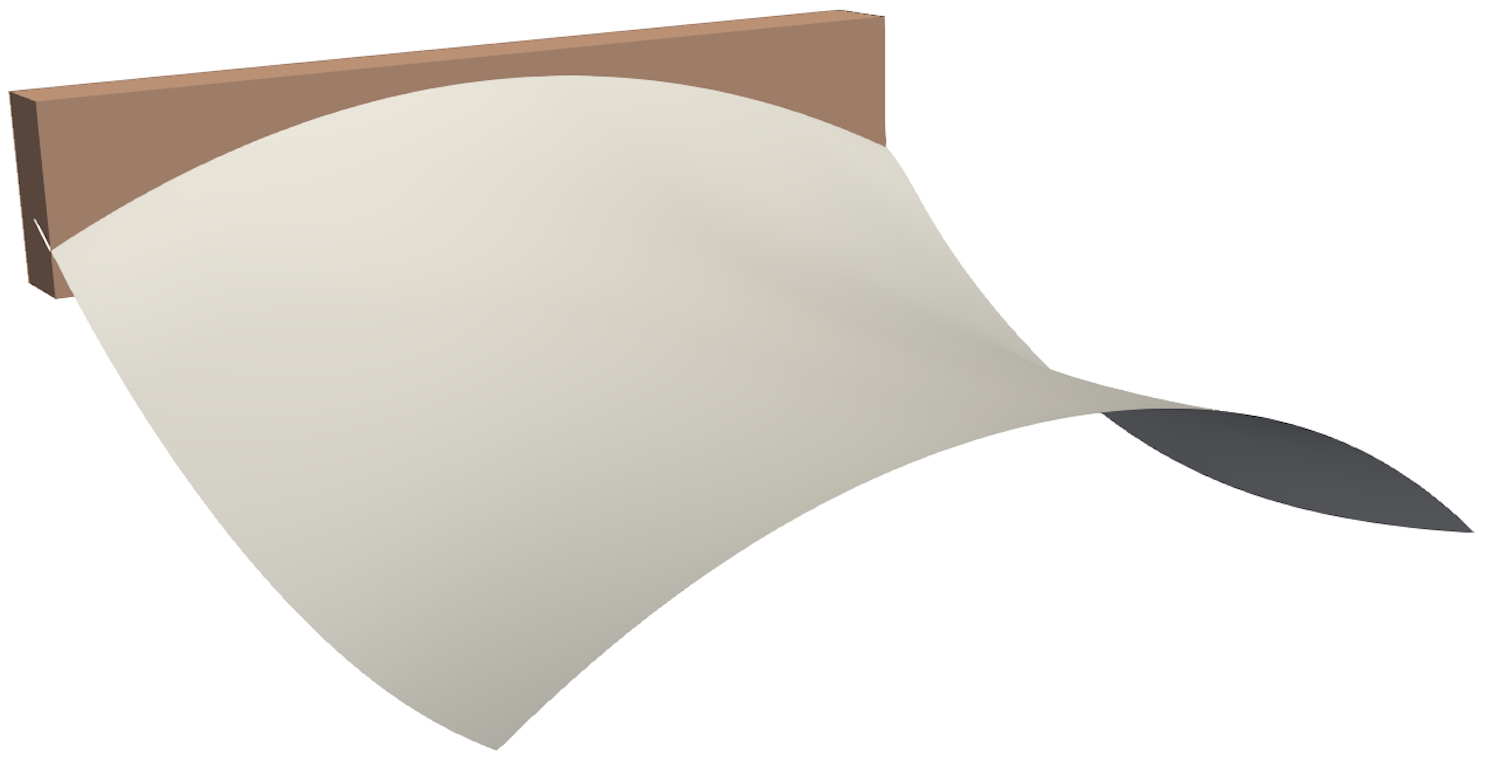}};

		\node[inner sep=0pt] (Undeformed) at (-10,1.9){\includegraphics[width=0.18\linewidth]{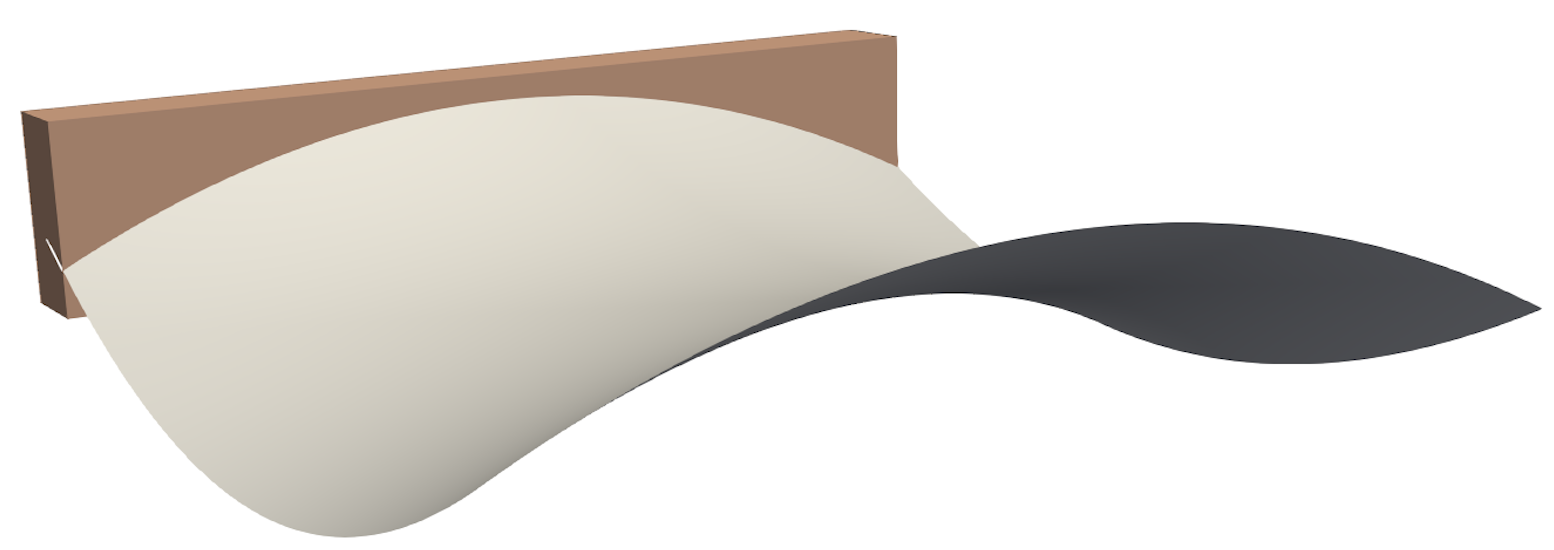}};

		\node[inner sep=0pt] (Undeformed) at (0.05,1.4) {\includegraphics[width=0.15\linewidth]{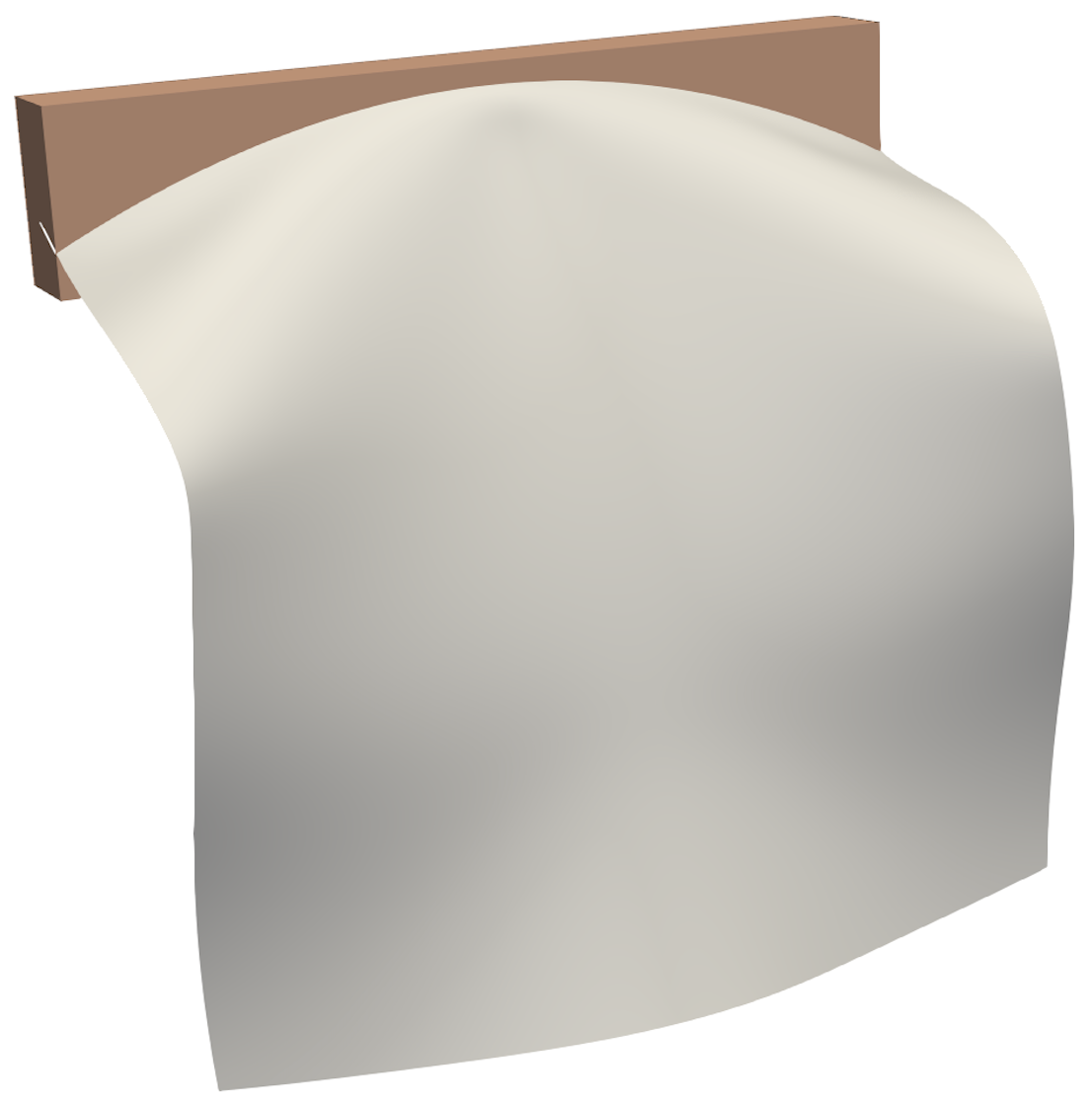}};
		\draw (-1.25,3.2) -- (-1.25,-0.3);
		\draw (-8.3,3.2) -- (-8.3,-0.3);
	\end{tikzpicture}}
	\caption{Left: Undeformed configuration for example (3).
		Middle: Deformed configuration for load $f_1$ and colorcoded corresponding to an element-wise evaluation of $\norm{ \nabla \theta_h [\psi_h] - \nabla \theta_{h^\ast} [\psi_{h^\ast}]}_{L^2(T)}$ for $h = 0.0028, h^\ast = 0.0014$ using logarithmic scaling. Right: the same for load $f_2$.}
	\label{fig:defSaddle}
\end{figure}
\begin{table}[htbp]
	\begin{tabularx}{0.83\textwidth}{c c c c c c c}
		\toprule
		$h$
		& \multicolumn{2}{c}{$E_h[\psi_h]$}
		& \multicolumn{2}{c}{$\norm{K_h[\psi_h] - K_{h^\ast}[\psi_{h^\ast}]}_{L^1(\omega)}$}
		& \multicolumn{2}{c}{$\norm{ \nabla \theta_h [\psi_h] - \nabla \theta_{h^\ast} [\psi_{h^\ast}]}_{L^2(\omega)}$}\\
		\cmidrule(lr){2-3}\cmidrule(lr){4-5} \cmidrule(l){6-7}
		&$f_1$ & $f_2$& $f_1$ & $f_2$& $f_1$ & $f_2$ \\
		\midrule
		0.0442&0.3236&1.2792&0.4212&0.9486&1.5903&3.5378 \\
		0.0221&0.2442&1.1052&0.2666&0.9135&1.0699&2.5771  \\
		0.0111&0.2136&1.0062&0.1989&0.7499&0.8182&2.3427   \\
		0.0055&0.1975&0.9190&0.1212&0.5657&0.6336&2.0894 \\
		0.0028&0.1843&0.8203&0.0520&0.3510&0.4046&1.5613  \\
		0.0014&0.1745&0.7077&-&-&-&-\\
		\bottomrule
	\end{tabularx}
	\caption{Experimental convergence evaluation for example (3): grid size, discrete energy, isometry error in $L^1$, approximate $L^1$ error of the discrete Gaussian curvature and approximate $L^2$ error in the hessian for loads $f_1$ and $f_2$.}
	\label{tab:saddle}
\end{table}

\medskip

\noindent \textbf{(4) L-shaped saddle-shaped surface.}
In this example, the parameter domain is given by an L-shape 
$\omega = (0,\frac{1}{2})\times(0,\frac{1}{2}) \cup (0,1)\times (\frac{1}{2}, 1)$.
We consider the initial configuration parametrized by $\psi_A$ as defined in~\eqref{eq:expsiA}.
The clamped boundary conditions are enforced on $\Gamma_D = \{1\}\times [\frac{1}{2},1]$ and two different loads
\begin{align*}
	f_1(x_1,x_2) = (0,0,-1)^\top\,,
	\quad f_2(x_1,x_2) =
	\begin{cases}
		(-5,0,-2)^\top  & \text{if } (x_1,x_2) \in [0,\frac{1}{2}]\times [0,\frac{1}{2}]\,, \\
		(0,0,-2)^\top   & \text{else}\,.
	\end{cases}
\end{align*}
are applied.
In~Figure~\ref{fig:defLshape}, the undeformed and the  deformed configurations are shown.
\begin{figure}[htbp]
	\resizebox{0.95\textwidth}{!}{
	\begin{tikzpicture}
			\node[inner sep=0pt] (Undeformed) at (-1.5,1.75) {\includegraphics[width=0.2\linewidth]{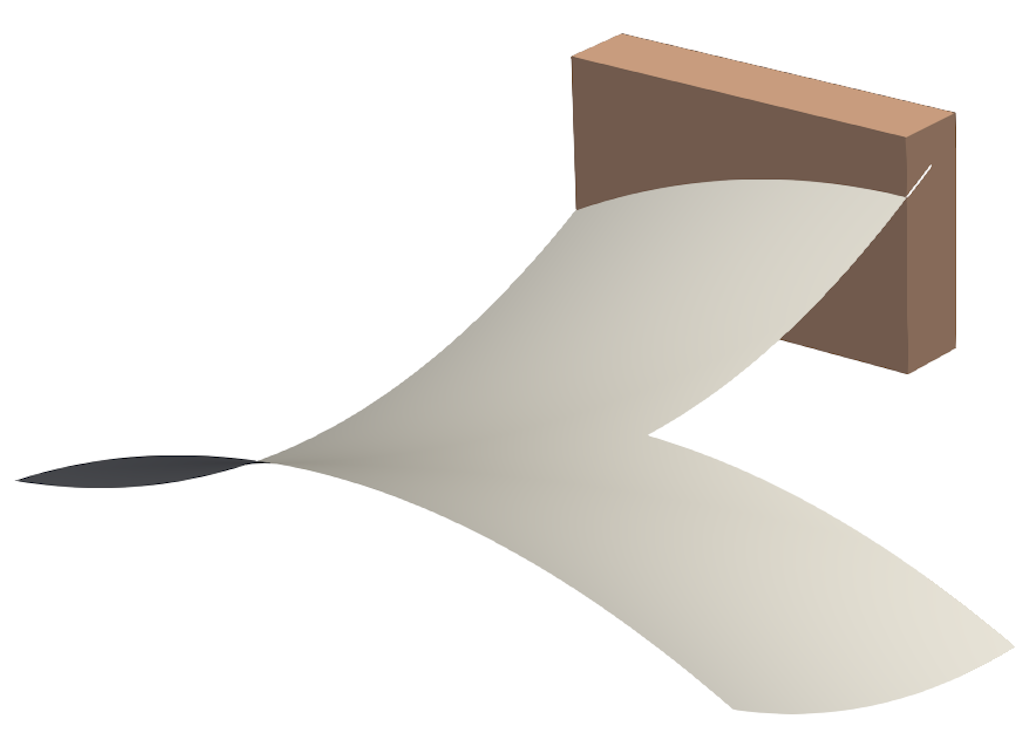}};
			\node[inner sep=0pt] (Undeformed) at (4.55,1.15) {\includegraphics[width=0.15\linewidth]{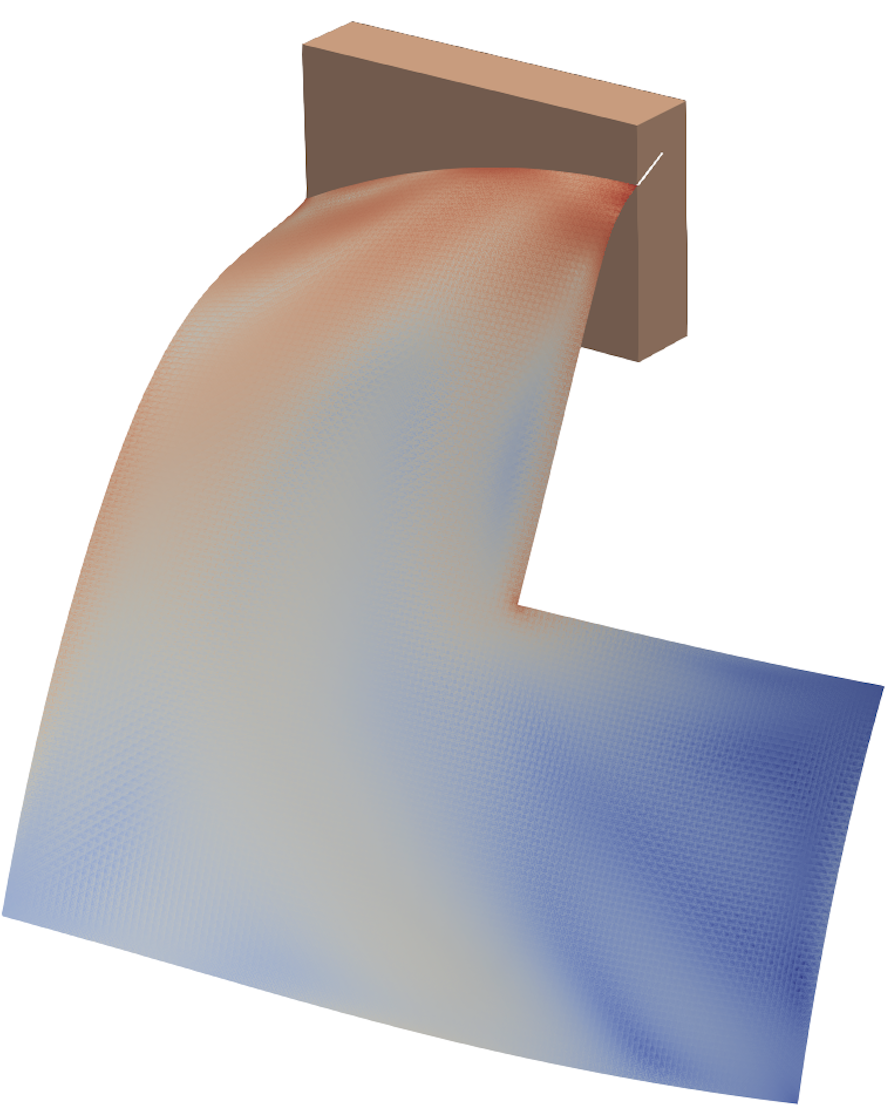}};
						\node[inner sep=0pt] (Undeformed) at (6.1,1.15) {\includegraphics[width= 0.006\linewidth]{images/skala.png}};
			\node[scale=0.7] at (6.7,2.75) {$4.9e-04$};
			\node[scale=0.7] at (6.7,1.25) {$3.0e-07$};
			\node[scale=0.7] at (6.7,-0.5) {$1.1e-10$};

			\node[inner sep=0pt] (Undeformed) at (1.55,1.2) {\includegraphics[width= 0.15\linewidth]{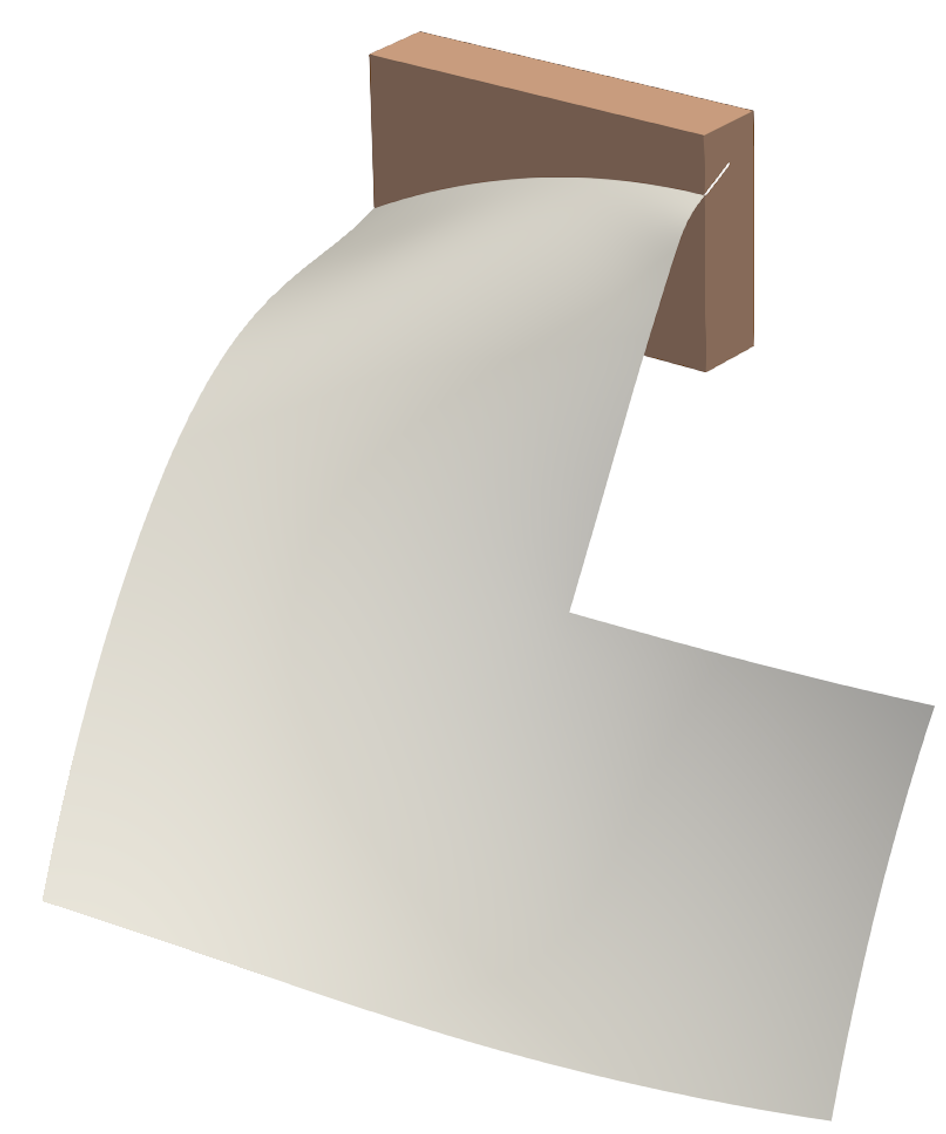}};
			\node[inner sep=0pt] (Undeformed) at (8.15,1.1) {\includegraphics[width= 0.1\linewidth]{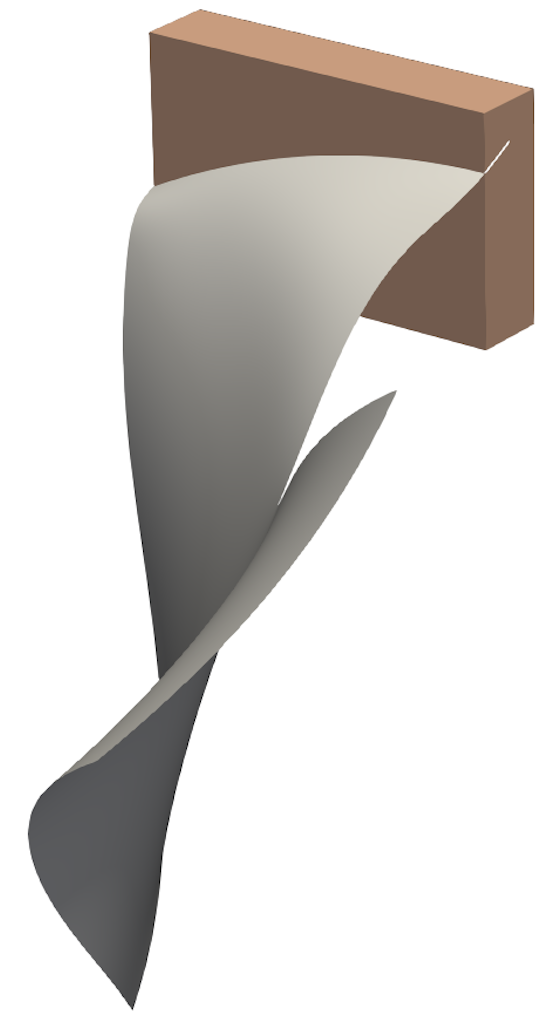}};
						\node[inner sep=0pt] (Undeformed) at (13,1.15) {\includegraphics[width= 0.006\linewidth]{images/skala.png}};
			\node[scale=0.7] at (13.6,2.75) {$2.6e-03$};
			\node[scale=0.7] at (13.6,1.25) {$1.0e-05$};
			\node[scale=0.7] at (13.6,-0.5) {$2.0e-08$};
			\node[inner sep=0pt] (Undeformed) at (11,1.1) {\includegraphics[width= 0.23\linewidth]{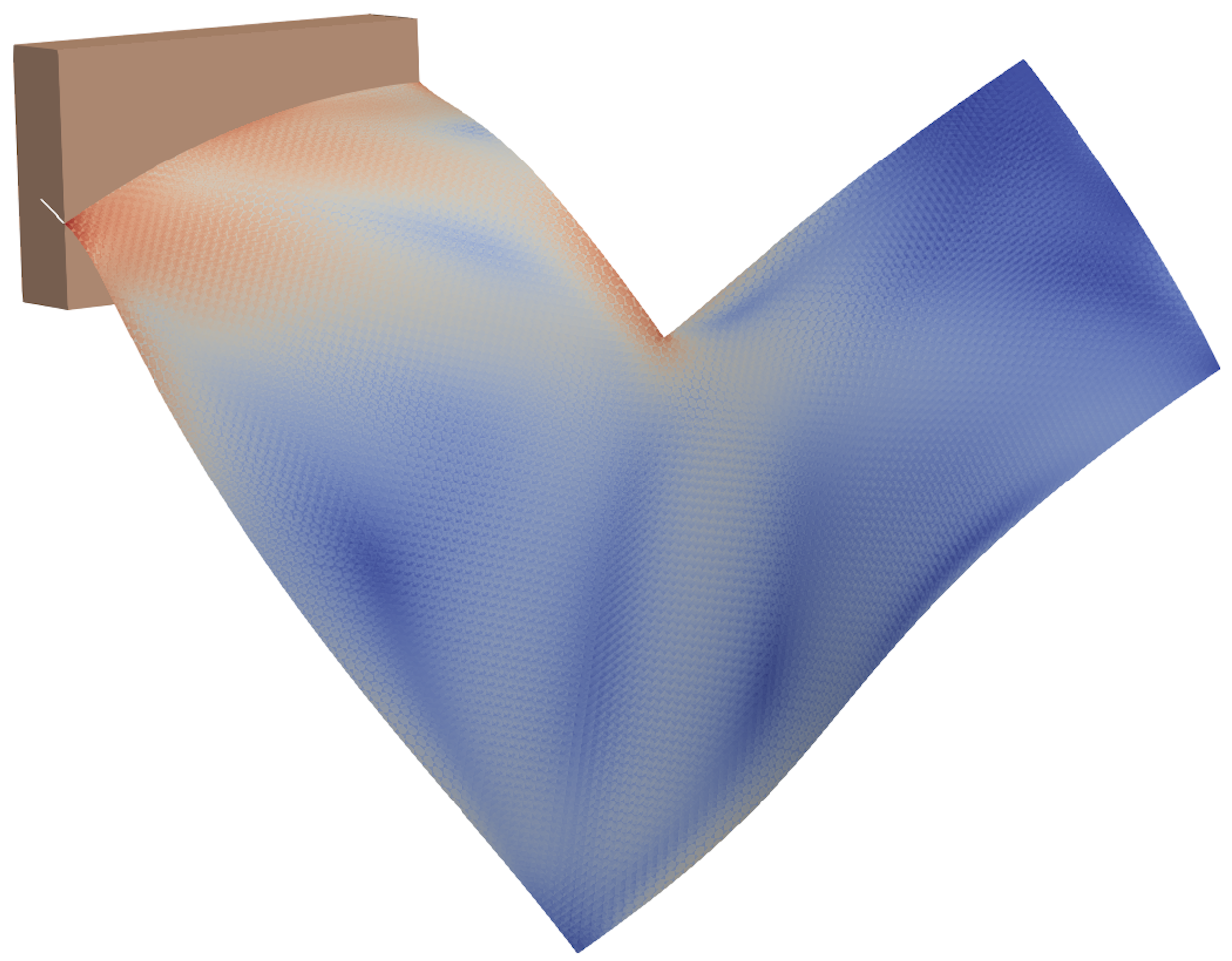}};

			\draw (0.16,-1) -- (0.16,3.1);
			\draw (7.35,-1) -- (7.35,3.1);
	\end{tikzpicture}}
	\caption{
	    Left: Undeformed configuration for example (4).
		Middle: Deformed configuration for load $f_1$ and color-coded corresponding to an element-wise evaluation of $\norm{ \nabla \theta_h [\psi_h] - \nabla \theta_{h^*} [\psi_{h^*}]}_{L^2(T)}$ for $h = 0.0028, h^* = 0.0014$ using logarithmic scaling.
		Right: the same for load $f_2$.}
	\label{fig:defLshape}
\end{figure}

\medskip

\noindent \textbf{(5) Saddle-shaped surface with different boundary conditions.}
In this last example, we consider as  in~\eqref{eq:expsiA} 
an initial configuration parametrized by $\psi_A$ over the unit square $\omega = (0,1)^2$. Instead of applying a force, we now enforce a deformation by imposing a modified clamped boundary conditions, namely $\psi_B(x) = \psi_A(x) + \frac{3}{16}$ for $x\in \{0\}\times [0,1]$, $\psi_B(x) = \psi_A(x) - \frac{3}{16}$ for $x\in \{1\}\times [0,1]$ and $\nabla\psi_B(x) =\nabla \psi_A(x) $ on $\{0\}\times [0,1] \cup \{1\}\times [0,1]$. In~Figure~\ref{fig:defSaddleBdryCond}, the undeformed and the resulting deformed configuration is shown.
In~\ref{tab:saddleBdry}, for decreasing grid size $h$, we depict the isometry error in $L^1$, the $L^1$-norm of the discrete Gauss-curvature and the $L^2$ approximate error in the Hessian of the energy.
As proven in Theorem~\ref{theoapprox}, we obtain linear convergence rate of the isometry error in $L^1$.
Note that in this case an approximation result of the admissible deformations by $H^3$ functions is unknown.
In accordance, we only obtain a sublinear convergence rate for the approximative second derivative.
\begin{figure}[htbp]
	\centering
	\resizebox{0.9\textwidth}{!}{
		\begin{tikzpicture}
		\node[inner sep=0pt] (Undeformed) at (-3.0,1.65) {\includegraphics[width=0.25\linewidth]{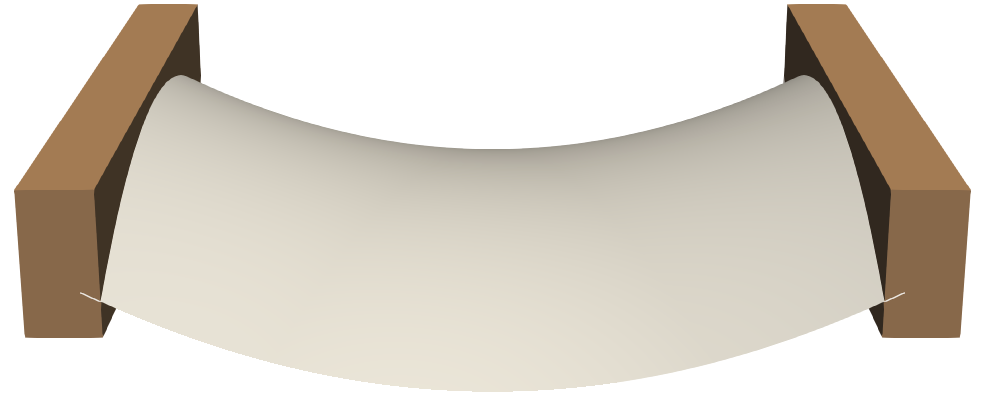}};
		\node[inner sep=0pt] (Undeformed) at (4.9,1.15) {\includegraphics[width=0.18\linewidth]{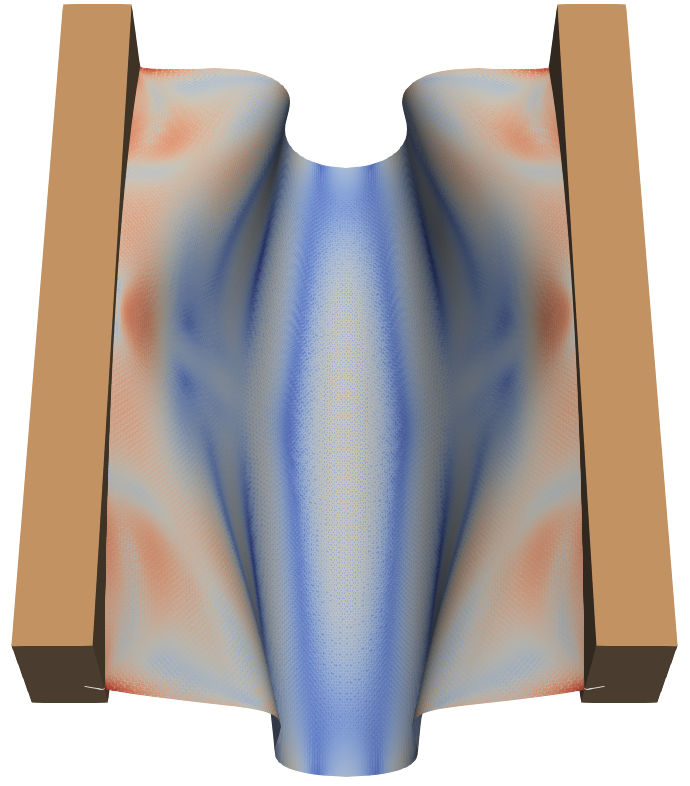}};
				\node[inner sep=0pt] (Undeformed) at (7.1,1.15) {\includegraphics[width= 0.006\linewidth]{images/skala.png}};
		\node[scale=0.7] at (7.7,2.75) {$1.3e-03$};
		\node[scale=0.7] at (7.7,1.2) {$1.0e-05$};
		\node[scale=0.7] at (7.7,-0.5) {$1.7e-07$};

		\node[inner sep=0pt] (Undeformed) at (1.55,1.2) {\includegraphics[width= 0.22\linewidth]{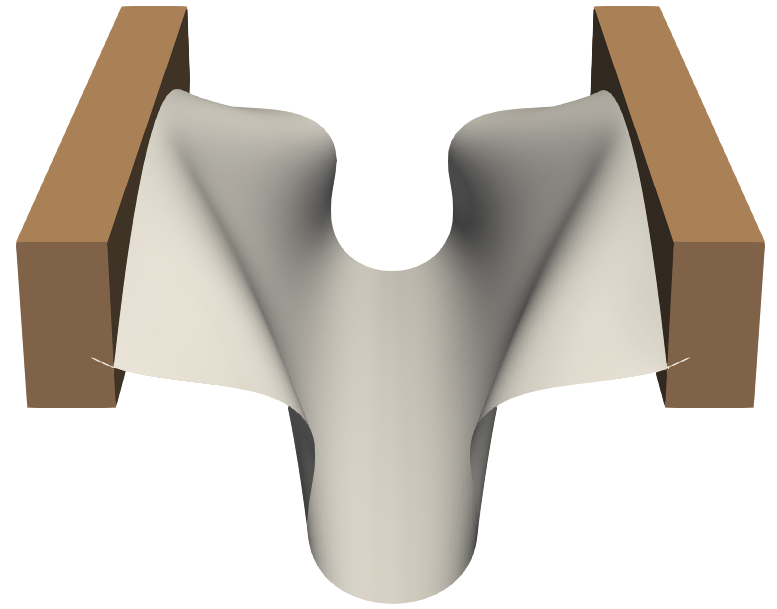}};
		
		\draw (-0.5,-0.5) -- (-0.5,3.1);
		\end{tikzpicture}}
	\caption{
		Left: undeformed configuration for example (5).
		Right: different views of the deformed configuration with prescribed boundary conditions, one image color-coded corresponding to an element-wise evaluation of $\norm{ \nabla \theta_h [\psi_h] - \nabla \theta_{h^*} [\psi_{h^*}]}_{L^2(T)}$ for $h = 0.0028, h^* = 0.0014$ using logarithmic scaling.
		}
	\label{fig:defSaddleBdryCond}
\end{figure}

\begin{table}[htbp]
	\centering
	\begin{tabularx}{0.78\textwidth}{c c c c c}
		\toprule
		$h$
		& $\norm{g[\psi_h] - g_A}_{L^1}$
		& $\norm{K_h[\psi_h] - K_{h^\ast}[\psi_{h^\ast}]}_{L^1(\omega)}$
		& $\norm{ \nabla \theta_h [\psi_h] - \nabla \theta_{h^\ast} [\psi_{h^\ast}]}_{L^2}$\\ 
		\midrule
		0.0442& 0.1025 & 2.3768 & 6.4416 \\
		0.0221& 0.0441 & 2.2603 & 5.0329 \\
		0.0111& 0.0168 & 1.8826 & 4.1574 \\
		0.0055& 0.0078 & 1.2785 & 3.4613 \\
		0.0028& 0.0041 & 0.7431 & 2.5670 \\
		0.0014& 0.0025 & -      & -  \\
		\bottomrule
	\end{tabularx}
	\caption{Experimental convergence evaluation for example (5):  
	grid size, isometry error in $L^1$, $L^1$ 
	norm of the discrete Gaussian curvature, and approximate $L^2$ error for the hessian.}
	\label{tab:saddleBdry}
\end{table}
\medskip

\bigskip

\paragraph{Acknowledgement}
We thank the anonymous reviewers for their valuable hints and helpful comments to improve this article.

\bibliographystyle{siam}

\def\polhk#1{\setbox0=\hbox{#1}{\ooalign{\hidewidth
  \lower1.5ex\hbox{`}\hidewidth\crcr\unhbox0}}}

\end{document}